\newtheorem{corollary}{Corollary}
\newtheorem{remark}{Remark}
\newtheorem{theorem}{Theorem}
\newtheorem{lemma}{Lemma}
\newcommand{\N}{\mathbb{N}}
\newcommand{\R}{\mathbb{R}}
\newcommand{\EE}{\mathbb{E}}
\newcommand{\BB}{\mathbb{B}\mathrm{ias}}
\newcommand{\VV}{\mathbb{V}\mathrm{ar}}
\newcommand{\bb}[1]{\boldsymbol{#1}}
\newcommand{\OO}{\mathcal O}
\newcommand{\oo}{\mathrm{o}}
\newcommand{\leqdef}{\vcentcolon=}
\newcommand{\rd}{{\rm d}}
\newcommand{\ind}{\mathds{1}}
\begin{document}

\articletype{Research Article{\hfill}Open Access}

\author[1]{Fr\'ed\'eric Ouimet}

\affil[1]{Centre de recherches math\'ematiques, Universit\'e de Montr\'eal, Montreal, QC H3T 1J4, Canada. Department of Mathematics and Statistics, McGill University, Montreal, QC H3A 0B9, Canada. Division of Physics, Mathematics and Astronomy, California Institute of Technology, Pasadena, CA 91125, USA.}

\title{\huge{On the boundary properties of Bernstein estimators on the simplex}}

\runningtitle{On the boundary properties of Bernstein estimators on the simplex}




\renewcommand{\lq}{\textquotedblleft}


\maketitle

\vspace{-3mm}
{\small
\noindent
\textbf{Abstract:}
In this paper, we study the asymptotic properties (bias, variance, mean squared error) of Bernstein estimators for cumulative distribution functions and density functions near and on the boundary of the $d$-dimensional simplex. Our results generalize those found by \citet{MR2925964}, who treated the case $d=1$, and complement the results from \citet{MR4287788} in the interior of the simplex. Since the ``edges'' of the $d$-dimensional simplex have dimensions going from $0$ (vertices) up to $d - 1$ (facets) and our kernel function is multinomial, the asymptotic expressions for the bias, variance and mean squared error are not straightforward extensions of one-dimensional asymptotics as they would be for product-type estimators studied by almost all past authors in the context of Bernstein estimators or asymmetric kernel estimators. This point makes the mathematical analysis much more interesting.}

\smallskip
\noindent
{\small \textbf{Keywords}: asymptotic statistics, Bernstein estimators, simplex, multinomial distribution, nonparametric estimation, density estimation, cumulative distribution function estimation, boundary bias, variance, mean squared error.}

\smallskip
\noindent
{\small \textbf{MSC}: Primary: 62G05; Secondary: 60F99, 62G07, 62G20}

\section{Introduction}\label{sec:intro}

The $d$-dimensional (unit) simplex and its interior are defined by
\begin{equation}\label{eq:def.simplex}
    \mathcal{S}_d \leqdef \big\{\bb{x}\in [0,1]^d: \|\bb{x}\|_1 \leq 1\big\} \quad \text{and} \quad \mathrm{Int}(\mathcal{S}_d) \leqdef \big\{\bb{x}\in (0,1)^d: \|\bb{x}\|_1 < 1\big\},
\end{equation}
where $\|\bb{x}\|_1 \leqdef \sum_{i=1}^d |x_i|$.
For any cumulative distribution function $F$ on $\mathcal{S}_d$ (meaning that it takes the values $0$ or $1$ outside $\mathcal{S}_d$), define the Bernstein polynomial of order $m$ for $F$ by
\begin{equation}\label{eq:Bernstein.polynomial}
    F_m^{\star}(\bb{x}) \leqdef \sum_{\bb{k}\in \N_0^d \cap m \mathcal{S}_d} F(\bb{k}/m) P_{\bb{k},m}(\bb{x}), \quad \bb{x}\in \mathcal{S}_d, ~m \in \N,
\end{equation}
where the weights are the following probabilities from the $\text{Multinomial}\hspace{0.2mm}(m,\bb{x})$ distribution:
\begin{equation}\label{eq:multinomial.probability}
    P_{\bb{k},m}(\bb{x}) \leqdef \frac{m!}{(m - \|\bb{k}\|_1)! \prod_{i=1}^d k_i!} (1 - \|\bb{x}\|_1)^{m - \|k\|_1} \prod_{i=1}^d x_i^{k_i}, \quad \bb{k}\in \N_0^d \cap m\mathcal{S}_d.
\end{equation}
(Here, the set $m \mathcal{S}_d$ has the obvious meaning $\{\bb{y}\in \R^d : \bb{y} = m \bb{x} ~\text{for some } \bb{x}\in \mathcal{S}_d\}$.)
The Bernstein estimator of $F$, denoted by $F_{n,m}^{\star}$, is the Bernstein polynomial of order $m$ for the empirical cumulative distribution function
\begin{equation}
    F_n(\bb{x}) \leqdef n^{-1} \sum_{i=1}^n \ind_{(-\bb{\infty},\bb{x}]}(\bb{X}_i),
\end{equation}
where the observations $\bb{X}_1, \bb{X}_2, \dots, \bb{X}_n$ are assumed to be independent and $F$ distributed.
Precisely, for $m,n \in \N$, let
\begin{equation}\label{eq:cdf.Bernstein.estimator}
    F_{n,m}^{\star}(\bb{x}) \leqdef \sum_{\bb{k}\in \N_0^d \cap m \mathcal{S}_d} F_n(\bb{k}/m) P_{\bb{k},m}(\bb{x}), \quad \bb{x}\in \mathcal{S}_d.
\end{equation}
It should be noted that the c.d.f.\ estimator in \eqref{eq:cdf.Bernstein.estimator} only makes sense here if the observations' support is contained in an hyperrectangle inside the unit simplex.
If the observations have full support on the unit simplex, then the c.d.f.\ estimator would take values in $(0,1)$ on the interior of the unit hypercube.

\vspace{2mm}
For a density $f$ supported on $\mathcal{S}_d$, we define the Bernstein density estimator of $f$ by
\begin{equation}\label{eq:histogram.estimator}
    \widetilde{f}_{n,m}(\bb{x}) \leqdef \hspace{-2mm}\sum_{\bb{k}\in \N_0^d \cap (m-1) \mathcal{S}_d} \hspace{-2mm} m^d \left\{\frac{1}{n} \sum_{i=1}^n \ind_{(\frac{\bb{k}}{m}, \frac{\bb{k} + 1}{m}]}(\bb{X}_i)\right\} P_{\bb{k},m-1}(\bb{x}), \quad \bb{x}\in \mathcal{S}_d,
\end{equation}
where $m^d$ is a scaling factor equal to the inverse of the volume of the hypercube $\big(\tfrac{\bb{k}}{m}, \tfrac{\bb{k} + 1}{m}\big] \leqdef \big(\frac{k_1}{m}, \frac{k_1 + 1}{m}\big] \times \dots \times \big(\frac{k_d}{m}, \frac{k_d + 1}{m}\big]$.
As pointed out in \cite{MR4287788}, the estimator $\widetilde{f}_{n,m}$ is not a proper density but integrates to $1$ asymptotically, and it can be written as a finite mixture of Dirichlet densities (in the one-dimensional case, finite beta mixtures are studied for example in \cite{MR1789474}).

\section{Review of the literature and motivation}\label{sec:brief.review}

    \cite{MR0397977} was the first author to consider Bernstein density estimation on the compact interval $[0,1]$, namely \eqref{eq:histogram.estimator} with $d = 1$.
    The author computes the asymptotics of the bias, variance and mean squared error (MSE) at each point where the second derivative of $f$ exists (also assuming that $f$ is bounded everywhere). His proof rests on careful Taylor expansions for the density terms inside the bulk of the binomial distribution while concentration bounds are applied to show that the contributions coming from outside the bulk are negligible.
    The optimal rate, with respect to the MSE, is achieved when $m \asymp n^{2/5}$ and shown to be $\OO_x(n^{-4/5})$ for $x\in (0,1)$.
    For $x\in \{0,1\}$ (at the boundary), he finds that the MSE is $\OO_x(n^{-3/5})$ using $m \asymp n^{2/5}$, but this choice of $m$ turns out to be suboptimal; the rate $\OO_x(n^{-2/3})$ can be achieved with $m \asymp n^{1/3}$.
    These results were generalized by \cite{MR0574548,MR0638651} to target densities with support of the form $[0,1]$, $[0,\infty)$ and $(-\infty,\infty)$, where the binomial weights of the density estimator were replaced by more general convolutions.

    \vspace{2mm}
    \cite{MR1293514} was the first author to consider Bernstein estimation in the multivariate context.
    Assuming that $f$ is twice continuously differentiable, he derived asymptotic expressions for the bias, variance and MSE of the density estimators on the two-dimensional unit simplex and the unit square $[0,1]^2$, and also proved their uniform strong consistency and asymptotic normality. He showed that the optimal rate of the MSE is $\OO_{\bb{x}}(n^{-2/3})$ for interior points when $d = 2$, and it is achieved when $m \asymp n^{1/3}$.
    On the ``edges'' of dimension $1$ (i.e., excluding the corners), he showed that the optimal rate is $\OO_{\bb{x}}(n^{-4/7})$ and it is achieved when $m \asymp n^{2/7}$.
    On the ``edges'' of dimension $0$ (i.e., the three corners of the $2$-dimensional simplex), he showed that the optimal rate is $\OO_{\bb{x}}(n^{-1/2})$ and it is achieved when $m \asymp n^{1/4}$. As we will see, this deterioration of the MSE near the boundary is solely due to an increase of the variance of the density estimator, while the bias remains asymptotically uniform and of order $\OO(m^{-1})$.
    This is in contrast with traditional kernel estimators, where the opposite trade-off happens: the bias increases near the boundary while the variance remains asymptotically the same.
    For both classes of estimators, the rates of the MSE might be the same on the boundary but for different reasons.

    \vspace{2mm}
    It should be noted that this variance increase near and on the boundary was previously observed by other authors for product-type asymmetric kernel estimators (meaning when the kernel is a product of one-dimensional kernels) on the geometrically simpler domains $[0,1]^d$ and $[0,\infty)^d$, see, e.g., \cite[page~142~and~(7a)]{MR2568128}, \cite[pages~50~and~52]{MR3979322} and \cite[Corollary~1]{MR4322320}.
    \cite{MR2925964} also observed this phenomenon in one dimension and provided explicit asymptotic expressions for the bias, variance and MSE of the classical Bernstein density and c.d.f.\ estimators near and on the boundary of $[0,1]$. Our main goal in this paper is to extend Leblanc's results to the $d$-dimensional simplex domain and complete the theoretical analysis initiated by \cite{MR4287788} on the asymptotic properties of Bernstein density and c.d.f.\ estimators on the simplex (with multinomial weights).

    \vspace{2mm}
    In \cite{MR2925964}, Leblanc extended the results of \cite{MR0397977} by not only studying the asymptotics of the bias and variance for points that are on the boundary but also {\it near} the boundary (i.e., for $x = \lambda / m$ where $\lambda \geq 0$ remains fixed as $n,m\to \infty$). In particular, he showed that the leading term of the variance can be written using modified Bessel functions of the first kind. He also proved similar results for the Bernstein c.d.f.\ estimator. As mentioned above, our goal in this paper is to generalize the results of \cite{MR2925964} to all $d\geq 1$. The results will help us understand how the dimension of the ``edge'' we are closest to influences the variance and the MSE of the density estimator and the c.d.f.\ estimator.

    \vspace{2mm}
    Since the ``edges'' of the $d$-dimensional simplex have dimensions going from $0$ (vertices) up to $d - 1$ (facets) and our kernel function is multinomial, the asymptotic expressions for the bias, variance and MSE are not straightforward extensions of the one-dimensional expressions as they would be for product-type estimators such as the ones in \cite[page~142~and~(7a)]{MR2568128}, \cite[pages~50~and~52]{MR3979322} and \cite[Corollary~1]{MR4322320}. This is why the results in the present paper are of mathematical interest. Apart from the present paper, the only article dealing with boundary results for multidimensional non product-type Bernstein estimators or asymmetric kernel estimators that we are aware of seems to be Corollary~2 of \cite{MR4322320}, where the variance of a multivariate elliptical-based Birnbaum–Saunders kernel density estimator is computed near and on the boundary.

    \vspace{2mm}
    Other asymptotic results (not directly related to the boundary, which is our focus here) have been proved in the unidimensional case by \cite{MR1910059,MR2068610,MR2179543,MR2351744,MR2488150,MR2662607,MR2960952,MR3174309,MR3412755,MR4130895,MR4335173,MR4374864}, among other authors, and in the multidimensional case by \cite{MR2270097,MR3474765,MR4287788}. Most of these papers (and more) are reviewed in Section~2 of \cite{MR4287788}.
    Similarly, for a detailed overview of the literature on the closely related Beta and Dirichlet kernel estimators, we refer to Section 2 of \cite{MR4319409}.

\section{Contribution, outline and notation}

\subsection{Contribution}

    Our theoretical contribution is to find asymptotic expressions for the bias, variance and MSE of the Bernstein c.d.f.\ and density estimators, defined respectively in \eqref{eq:cdf.Bernstein.estimator} and \eqref{eq:histogram.estimator}, {\it near} and on the boundary of the $d$-dimensional simplex.
    We also deduce the asymptotically optimal choice of the bandwidth parameter $m$ using the expressions for the MSE, which can be used in practice to implement a plug-in selection method.
    All these results generalize those found in \cite{MR2925964} for the unit interval and complement those found in \cite{MR4287788} in the interior of the simplex.
    Our rates of convergence for the MSE are in line with those recently found in \cite{MR4319409} for Dirichlet kernel estimator.
    In both cases, the general rule is that the variable smoothing that is built-in for Bernstein estimators and asymmetric kernel estimators yields an asymptotically smaller bias near the boundary at the cost of an increase in variance, compared to traditional multivariate kernel estimators.

    \vspace{2mm}
    Under the assumption that the target density is twice continuously differentiable, we find in particular that the variance is $\OO_{\bb{x}}(n^{-1} m^{d/2})$ in the interior of the simplex and it gets multiplied by a factor $m^{1/2}$ everytime we go near the boundary in one of the $d$ dimensions. If we are near an edge of dimension $d - |\mathcal{J}|$ (see Section~\ref{sec:results.density.estimator} for the definition of $\mathcal{J}$), then the variance is $\OO_{\bb{x}}(n^{-1} m^{(d + |\mathcal{J}|)/2})$. Additional smoothness conditions on the partial derivatives of the target density can improve those rates, see, e.g., Corollary~\ref{cor:Leblanc.2012.boundary.Corollary.1.next} for more details on this point.

    \vspace{2mm}
    In contrast to other methods of boundary bias reduction (such as the reflection method or boundary kernels (see, e.g., \cite[Chapter 6]{MR3329609}), this property is built-in for Bernstein estimators, which makes them one of the easiest to use in the class of estimators that are asymptotically unbiased near (and on) the boundary. Bernstein estimators are also non-negative everywhere on their domain, which is definitely not the case of many estimators corrected for boundary bias.
    This is another reason for their desirability.
    Also, as an anonymous referee pointed out, the fact that the bandwidth parameter $m$ is an integer makes the optimisation step easier to implement (for most bandwidth selection criteria like least-square cross validation, likelihood cross-validation, etc.) than for estimators where the bandwidth parameter $h$ is a real number.
    Bandwidth selection methods and their consistency will be investigated thoroughly in upcoming work.

    \subsection{Outline}

    In Section~\ref{sec:results.density.estimator} and Section~\ref{sec:results.cdf.estimator}, we state our results for the density estimator and the c.d.f.\ estimator, respectively.
    The proofs are given in Section~\ref{sec:proofs.results.density.estimator} and Section~\ref{sec:proofs.results.cdf.estimator}.
    Some technical lemmas and tools are gathered in Appendix~\ref{sec:tech.lemmas}.

    \subsection{Notation}

    Throughout the paper, the notation $u = \OO(v)$ means that $\limsup |u / v| < C < \infty$ as $m$ or $n$ tend to infinity, depending on the context.
    The positive constant $C$ can depend on the target c.d.f.\ $F$, the target density $f$ or the dimension $d$, but no other variable unless explicitly written as a subscript. One common occurrence is a local dependence of the asymptotics with a given point $\bb{x}$ on the simplex, in which case we would write $u = \OO_{\bb{x}}(v)$.
    In a similar fashion, the notation $u = \oo(v)$ means that $\lim |u / v| = 0$ as $m$ or $n$ tend to infinity.
    The same rule applies for the subscript.
    The symbol $\mathscr{D}$ over an arrow `$\longrightarrow$' will denote the convergence in law (or distribution).
    We will use the shorthand $[d] \leqdef \{1,2,\dots,d\}$ in several places.
    The functions
    \begin{equation}\label{eq:def:I.0.I.1}
        I_{\nu}(z) \leqdef \sum_{k=0}^{\infty} \frac{(z/2)^{2k + \nu}}{k! (k + \nu)!}, \quad \nu\in \{0,1\},
    \end{equation}
    will denote the modified Bessel functions of the first kind of order $0$ and $1$, respectively.
    For any vector $\bb{v}\in \R^d$ and any subset of indices $A\subseteq [d]$, we write
    \begin{equation}\label{eq:vector.subset}
        \bb{v}_A \leqdef (v_i)_{i\in A} \quad \text{and} \quad \psi_A(\bb{x}) \leqdef \left[(4\pi)^{|A|} \, \bigg(1 - \sum_{i\in A} x_i\bigg) \prod_{i\in A} x_i\right]^{-1/2},
    \end{equation}
    with the conventions $\sum_{\emptyset} \leqdef 0$ and $\prod_{\emptyset} \leqdef 1$.
    Finally, the bandwidth parameter $m = m(n)$ is always implicitly a function of the number of observations, the only exceptions being in Lemmas~\ref{lem:Leblanc.2012.boundary.Lemma.2},~\ref{lem:Leblanc.2012.boundary.Lemma.6},~\ref{lem:Leblanc.2012.boundary.Lemma.3},~\ref{lem:Leblanc.2012.boundary.Lemma.7} and the related proofs.

\section{Results for the density estimator \texorpdfstring{$\widetilde{f}_{n,m}$}{hat(f)n,m}}\label{sec:results.density.estimator}

For every result stated in this section, we will make the following assumption:

\begin{equation}\label{eq:assump:f.density}
    \begin{aligned}
        \bullet \quad
        &\text{$f\in C^2(\mathrm{Int}(\mathcal{S}_d))$ and there exists an open set $\mathcal{U}\subseteq \R^d$ that contains $\mathcal{S}_d$ and } \\[-1mm]
        &\text{an extension $f_{\text{ext}} : \mathcal{U}\to \R$ such that $f_{\text{ext}} \equiv f$ on $\mathcal{S}_d$ and $f_{\text{ext}}\in C^2(\mathcal{U})$.}
    \end{aligned}
\end{equation}

\begin{remark}\label{rem:f.C.2.S.meaning}
    If $f\in C^2(\R^d)$, then we can just take $f_{\text{ext}} = f$.
    However, if $f$ is discontinuous anywhere at some point on the boundary of $\mathcal{S}_d$, then, at that point, the partial derivatives of $f$ will technically refer to the partial derivatives of $f_{\text{ext}}$.
\end{remark}

In the first lemma, we obtain a general expression for the bias of the density estimator.

\begin{lemma}[Bias of $\widetilde{f}_{n,m}(\bb{x})$ on $\mathcal{S}_d$]\label{lem:Leblanc.2012.boundary.Lemma.2}
    Assume \eqref{eq:assump:f.density}.
    Then, uniformly for $\bb{x}\in \mathcal{S}_d$, we have, as $m\to \infty$,
    \begin{equation}\label{eq:lem:Leblanc.2012.boundary.Lemma.2}
        \BB[\widetilde{f}_{n,m}(\bb{x})] = m^{-1} \Delta_1(\bb{x}) + m^{-2} \Delta_2(\bb{x}) + \frac{1}{m^2} \sum_{i\in [d]} \oo\left(1 + \sqrt{(m-1) x_i (1 - x_i)} + (m - 1) x_i (1 - x_i)\right),
    \end{equation}
    \vspace{-1mm}
    where
    \begin{equation}\label{eq:def:b.x}
        \begin{aligned}
            \Delta_1(\bb{x})
            &\leqdef \sum_{i\in [d]} \big(\tfrac{1}{2} - x_i\big) \, \frac{\partial}{\partial x_i} f(\bb{x}) + \frac{1}{2} \sum_{i,j\in [d]} \big(x_i \ind_{\{i = j\}} - x_i x_j\big) \, \frac{\partial^2}{\partial x_i \partial x_j} f(\bb{x}), \\[-1mm]
            \Delta_2(\bb{x})
            &\leqdef \sum_{i,j\in [d]} \big(\tfrac{1}{6} \ind_{\{i = j\}} + \tfrac{1}{8} \ind_{\{i \neq j\}} - \tfrac{1}{2} x_i \ind_{\{i = j\}} - \tfrac{1}{2} x_j + x_i x_j\big) \, \frac{\partial^2}{\partial x_i \partial x_j} f(\bb{x}).
        \end{aligned}
    \end{equation}
\end{lemma}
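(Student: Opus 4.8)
The plan is to compute $\EE[\widetilde{f}_{n,m}(\bb{x})]$ directly and subtract $f(\bb{x})$. First I would note that $\EE[\frac{1}{n}\sum_{i=1}^n \ind_{(\frac{\bb{k}}{m},\frac{\bb{k}+1}{m}]}(\bb{X}_i)] = \PP(\bb{X}_1 \in (\frac{\bb{k}}{m},\frac{\bb{k}+1}{m}]) = \int_{(\frac{\bb{k}}{m},\frac{\bb{k}+1}{m}]} f(\bb{y})\, \rd\bb{y}$, so that
\begin{equation}
    \EE[\widetilde{f}_{n,m}(\bb{x})] = \sum_{\bb{k}\in \N_0^d \cap (m-1)\mathcal{S}_d} m^d \left(\int_{(\frac{\bb{k}}{m},\frac{\bb{k}+1}{m}]} f(\bb{y})\, \rd\bb{y}\right) P_{\bb{k},m-1}(\bb{x}).
\end{equation}
Using the extension $f_{\text{ext}}$ from \eqref{eq:assump:f.density}, I would Taylor-expand $f$ around the ``left corner'' $\bb{k}/m$ of each small hypercube to second order with integral remainder; integrating the Taylor polynomial over the hypercube of side $1/m$ produces, after multiplying by $m^d$, an expansion of the form $f(\bb{k}/m) + \frac{1}{2m}\sum_i \partial_i f(\bb{k}/m) + \frac{1}{m^2}(\text{quadratic-in-}\partial^2\text{ terms}) + (\text{remainder})$. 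Equivalently — and this is the cleaner bookkeeping — one recognizes $m^d\int_{(\bb{k}/m,(\bb{k}+1)/m]} f \,\rd\bb{y}$ as $\EE[f(\bb{k}/m + \bb{U}/m)]$ where $\bb{U}$ is uniform on $(0,1]^d$, and uses $\EE[U_i] = \tfrac12$, $\EE[U_i^2] = \tfrac13$, $\EE[U_iU_j] = \tfrac14$ for $i\neq j$ to read off the coefficients $\tfrac16\ind_{\{i=j\}} + \tfrac18\ind_{\{i\neq j\}}$ appearing in $\Delta_2$ (the $\tfrac16 = \tfrac12\cdot\tfrac13$ from the diagonal, $\tfrac18 = \tfrac12\cdot\tfrac14$ off-diagonal).

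Next I would push the $P_{\bb{k},m-1}(\bb{x})$-weighted sum through. Writing $\bb{K}\sim \mathrm{Multinomial}(m-1,\bb{x})$ so that $\sum_{\bb{k}} g(\bb{k}/m) P_{\bb{k},m-1}(\bb{x}) = \EE[g(\bb{K}/m)]$, I need the first- and second-order expansions of $\EE[\partial^\alpha f(\bb{K}/m)]$ for $|\alpha|\leq 2$. The key moment facts are $\EE[K_i] = (m-1)x_i$, $\VV[K_i] = (m-1)x_i(1-x_i)$, $\mathrm{Cov}(K_i,K_j) = -(m-1)x_ix_j$, and the centered third moments are $\OO(m)$ uniformly. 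A Taylor expansion of $f$ (and of $\partial_i f$) around $\bb{x}$, combined with $\bb{K}/m - \bb{x} = \bb{K}/m - \frac{m-1}{m}\bb{x} - \frac{1}{m}\bb{x}$, yields: the $m^{-1}$ term picks up $\sum_i(\tfrac12 - x_i)\partial_i f$ from the $\tfrac1{2m}\sum_i\partial_i f$ piece shifted to $\bb{x}$ plus the $-\frac1m\bb{x}\cdot\nabla f$ coming from the mean shift $\frac{m-1}{m}\bb{x}$, together with the variance/covariance contribution $\frac{1}{2m}\sum_{i,j}(x_i\ind_{\{i=j\}} - x_ix_j)\partial_i\partial_j f$ — exactly $\Delta_1(\bb{x})$. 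Collecting all remaining $m^{-2}$ contributions (cross terms between the hypercube-averaging shift and the multinomial fluctuations, the $\tfrac{1}{m^2}$ quadratic term from the first expansion, and the expansion of $\partial_i f$ itself around $\bb{x}$) gives $\Delta_2(\bb{x})$; I would verify the coefficient $-\tfrac12 x_i\ind_{\{i=j\}} - \tfrac12 x_j + x_ix_j$ by carefully matching the $\ind_{\{i=j\}}$ versus $\ind_{\{i\neq j\}}$ terms from $\EE[K_iK_j]$ against the shift $-\frac1m\bb{x}$.

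The main obstacle is the \emph{uniformity} over $\bb{x}\in\mathcal{S}_d$ of the error term and, in particular, its stated form $m^{-2}\sum_{i\in[d]}\oo\big(1 + \sqrt{(m-1)x_i(1-x_i)} + (m-1)x_i(1-x_i)\big)$. The issue is that near the boundary the multinomial fluctuations degenerate ($\VV[K_i]\to 0$ as $x_i\to 0$ or as $\|\bb{x}\|_1\to 1$), so a naive bound by a constant times $m^{-2}$ is false in the interior and a naive bound scaling like the variance is too crude near the boundary; the remainder must interpolate. To handle this I would split the third-order Taylor remainder and bound it using the explicit structure $\EE|K_i/m - x_i|^3 \leq C(m^{-3} + m^{-3/2}(\VV[K_i])^{1/2}\cdot(\dots))$ — more precisely, controlling $\EE[|K_i/m - (m-1)x_i/m|^{3}]$ by $m^{-3}$ times a polynomial in $\sqrt{(m-1)x_i(1-x_i)}$ of degree $3$, which is dominated by $1 + \sqrt{(m-1)x_i(1-x_i)} + (m-1)x_i(1-x_i) + ((m-1)x_i(1-x_i))^{3/2}$; the cubic term is absorbed because it is multiplied by $m^{-3}$ rather than $m^{-2}$. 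Uniform continuity of the second derivatives of $f_{\text{ext}}$ on a compact neighborhood of $\mathcal{S}_d$ turns the ``$\OO$'' into ``$\oo$''. I would likely isolate this moment estimate as one of the technical lemmas in Appendix~\ref{sec:tech.lemmas} and invoke it here, so that the body of the proof reduces to the bookkeeping of the first two orders described above.
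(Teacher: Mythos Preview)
Your double Taylor expansion --- first of the hypercube average $m^d\int_{(\bb{k}/m,(\bb{k}+1)/m]} f$ around $\bb{k}/m$ (phrased via the uniform $\bb{U}$), then of $f(\bb{k}/m)$ and $\partial_i f(\bb{k}/m)$ around $\bb{x}$, followed by the shift $k_i - mx_i = (k_i-(m-1)x_i) - x_i$ and evaluation through the multinomial central moments of Lemma~\ref{lem:Leblanc.2012.boundary.Lemma.1} --- is exactly the paper's argument, and produces $\Delta_1$ and $\Delta_2$ with the stated coefficients.

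The one point that does not go through as written is your remainder analysis. You invoke a ``third-order Taylor remainder'' and control it via $\EE|K_i/m - x_i|^3$, but assumption \eqref{eq:assump:f.density} only gives $f\in C^2$, so there is no third-order expansion available; a cubic-moment bound would give $\OO$, not the required $\oo$. The paper handles this by first localizing to the bulk $\|\bb{k}/m - \bb{x}\|_1 \leq \delta_n$ via a binomial concentration bound (the tail contributes $\oo(m^{-2})$ uniformly), and \emph{then} writing each second-order Taylor remainder as the exact second-order term times $(1+\oo(1))$, the $\oo(1)$ coming from uniform continuity of $\partial^2 f$ on the bulk. This produces a per-$\bb{k}$ error $m^{-2}\sum_i \oo\big(1 + |k_i-(m-1)x_i| + |k_i-(m-1)x_i|^2\big)$; summing against $P_{\bb{k},m-1}(\bb{x})$ and using Jensen for $\EE|K_i-(m-1)x_i|\leq\sqrt{(m-1)x_i(1-x_i)}$ gives exactly the stated error form. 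Your final sentence about uniform continuity of $\partial^2 f_{\text{ext}}$ is the correct ingredient --- it just needs to be paired with the localization step and applied to the \emph{second}-order remainder, not combined with a third-moment estimate.
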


By considering points $\bb{x}\in \mathcal{S}_d$ that are close to the boundary in some components (see the subset of indices $\mathcal{J}\subseteq [d]$ below), we get the bias of the density estimator near the boundary.

\begin{theorem}[Bias of $\widetilde{f}_{n,m}(\bb{x})$ near the boundary of $\mathcal{S}_d$]\label{thm:Leblanc.2012.boundary.Theorem.1}
    Assume \eqref{eq:assump:f.density}.
    For any $\bb{x}\in \mathcal{S}_d$ such that $x_i = \lambda_i / m$ for all $i\in \mathcal{J}\subseteq [d]$ ($\lambda_i\geq 0$ is fixed $\forall i\in \mathcal{J}$) and $x_i\in (0,1)$ is independent of $m$ for all $i\in [d] \backslash \mathcal{J}$, we have, as $n\to \infty$,
    \begin{align}\label{eq:thm:Leblanc.2012.boundary.Theorem.1}
        \BB(\widetilde{f}_{n,m}(\bb{x}))
        &= \frac{b_{\mathcal{J}}(\bb{x})}{m} + \frac{1}{m^2} \left\{\hspace{-1mm}
            \begin{array}{l}
                -\sum_{i\in [d]} \lambda_i \, \left.\frac{\partial}{\partial x_i} f(\bb{x})\right|_{\bb{x}_{\mathcal{J}} = \bb{0}} \\
                + \sum_{i,j\in [d]} \big((\tfrac{1}{6} + \lambda_i) \ind_{\{i = j\}} + (\tfrac{1}{8} + \tfrac{\lambda_j}{2}) \ind_{\{i \neq j\}}\big) \left.\frac{\partial^2}{\partial x_i \partial x_j} f(\bb{x})\right|_{\bb{x}_{\mathcal{J}} = \bb{0}}
            \end{array}
            \hspace{-1mm}\right\} \notag \\
        &\quad+ \oo_{\bb{\lambda}_{\mathcal{J}}}\big(m^{-2} + \ind_{\{\mathcal{J}\neq [d]\}} m^{-1}\big),
    \end{align}
    where $\bb{x}_{\mathcal{J}}$ and $\bb{\lambda}_{\mathcal{J}}$ are defined in \eqref{eq:vector.subset}, and where
    \begin{equation}\label{eq:def.b.J.x}
        b_{\mathcal{J}}(\bb{x}) \leqdef \sum_{i\in [d]} \big(\tfrac{1}{2} - x_i \ind_{\{i\in [d]\backslash \mathcal{J}\}}\big) \left.\frac{\partial}{\partial x_i} f(\bb{x})\right|_{\bb{x}_{\mathcal{J}} = \bb{0}} + \sum_{i,j\in [d]\backslash \mathcal{J}} \tfrac{1}{2} \big(x_i \ind_{\{i = j\}} - x_i x_j\big) \left.\frac{\partial^2}{\partial x_i \partial x_j} f(\bb{x})\right|_{\bb{x}_{\mathcal{J}} = \bb{0}}.
    \end{equation}
\end{theorem}

Next, we obtain a general expression for the variance of the density estimator.

\begin{lemma}[Variance of $\widetilde{f}_{n,m}(\bb{x})$ on $\mathcal{S}_d$]\label{lem:Leblanc.2012.boundary.Lemma.4}
    Assume \eqref{eq:assump:f.density}.
    For any $\bb{x}\in \mathcal{S}_d$ such that $x_i = \lambda_i / m$ for all $i\in \mathcal{J}\subseteq [d]$ ($\lambda_i\geq 0$ is fixed $\forall i\in \mathcal{J}$) and $x_i\in (0,1)$ is independent of $m$ for all $i\in [d] \backslash \mathcal{J}$, we have, as $n\to \infty$,
    \begin{equation}\label{eq:lem:Leblanc.2012.boundary.Lemma.4}
        \begin{aligned}
            \VV(\widetilde{f}_{n,m}(\bb{x}))
            &= \frac{m^d}{n} f(\bb{x}) \hspace{-3mm} \sum_{\bb{k}\in \N_0^d \cap (m-1)\mathcal{S}_d} \hspace{-3mm} P_{\bb{k},m-1}^2(\bb{x}) \\
            &\quad+ \frac{m^{d-1}}{n} \sum_{i\in [d]} \OO\left(1 + \sqrt{\sum_{\bb{k}\in \N_0^d \cap (m-1)\mathcal{S}_d} \hspace{-3mm} |k_i - m x_i|^2 P_{\bb{k},m-1}(\bb{x})} \sqrt{\sum_{\bb{k}\in \N_0^d \cap (m-1)\mathcal{S}_d} P_{\bb{k},m-1}^3(\bb{x})}\right).
        \end{aligned}
    \end{equation}
\end{lemma}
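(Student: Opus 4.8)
The plan is to reduce the variance to that of a single i.i.d.\ summand, carry out a first-order expansion of the bin probabilities $p_{\bb{k}}$ about the value $f(\bb{x})$, and control the remainder by Cauchy--Schwarz.

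First I would rewrite the estimator as an i.i.d.\ average. Up to a Lebesgue-null set, each point of $\mathcal{S}_d$ lies in exactly one hypercube $(\bb{k}/m,(\bb{k}+1)/m]$ with $\bb{k}\in \N_0^d\cap(m-1)\mathcal{S}_d$, and $f$ charges no null set, so $\widetilde{f}_{n,m}(\bb{x}) = n^{-1}\sum_{i=1}^n g(\bb{X}_i)$ almost surely, where $g(\bb{y})\leqdef m^d P_{\lfloor m\bb{y}\rfloor,\,m-1}(\bb{x})$. By independence, $\VV(\widetilde{f}_{n,m}(\bb{x})) = n^{-1}\{\EE[g(\bb{X}_1)^2]-(\EE[g(\bb{X}_1)])^2\}$. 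Setting $p_{\bb{k}}\leqdef \PP(\bb{X}_1\in(\bb{k}/m,(\bb{k}+1)/m]) = \int_{(\bb{k}/m,(\bb{k}+1)/m]} f(\bb{y})\,\rd\bb{y}$, this equals $n^{-1}\{m^{2d}\sum_{\bb{k}}P_{\bb{k},m-1}^2(\bb{x})\,p_{\bb{k}} - (m^d\sum_{\bb{k}}P_{\bb{k},m-1}(\bb{x})\,p_{\bb{k}})^2\}$, where all sums run over $\bb{k}\in\N_0^d\cap(m-1)\mathcal{S}_d$.

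Next I would extract the leading term. Under \eqref{eq:assump:f.density}, $f$ is bounded on $\mathcal{S}_d$ and $f_{\text{ext}}$ is $C$-Lipschitz on some fixed compact neighbourhood of $\mathcal{S}_d$ contained in $\mathcal{U}$, which (for $m$ large) contains every relevant hypercube. Decompose $p_{\bb{k}} = m^{-d}f(\bb{x}) + \rho_{\bb{k}}$ with $\rho_{\bb{k}}\leqdef \int_{(\bb{k}/m,(\bb{k}+1)/m]}(f(\bb{y})-f(\bb{x}))\,\rd\bb{y}$. The constant part contributes exactly $\frac{m^d}{n}f(\bb{x})\sum_{\bb{k}}P_{\bb{k},m-1}^2(\bb{x})$, the leading term of \eqref{eq:lem:Leblanc.2012.boundary.Lemma.4}. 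Since $|p_{\bb{k}}|\leq m^{-d}\|f\|_\infty$ we have $|\EE[g(\bb{X}_1)]|\leq\|f\|_\infty$, so $n^{-1}(\EE[g(\bb{X}_1)])^2 = \OO(n^{-1}) = \frac{m^{d-1}}{n}\OO(m^{1-d})$, which is absorbed in the error term because $m^{1-d}\leq1$ for $d\geq1$. For a hypercube contained in $\mathcal{S}_d$ (equivalently $\|\bb{k}\|_1\leq m-d$) we have $f\equiv f_{\text{ext}}$ on it, hence $|\rho_{\bb{k}}|\leq Cm^{-d}\sup_{\bb{y}\in(\bb{k}/m,(\bb{k}+1)/m]}\|\bb{y}-\bb{x}\|_1 \leq Cm^{-d-1}(\sum_{i\in[d]}|k_i-mx_i|+d)$; inserting this into $\frac{m^{2d}}{n}\sum_{\bb{k}}P_{\bb{k},m-1}^2(\bb{x})\rho_{\bb{k}}$, using $\sum_{\bb{k}}P_{\bb{k},m-1}^2(\bb{x})\leq1$, and applying Cauchy--Schwarz in the form $\sum_{\bb{k}}|k_i-mx_i|P_{\bb{k},m-1}^2(\bb{x}) \leq (\sum_{\bb{k}}|k_i-mx_i|^2P_{\bb{k},m-1}(\bb{x}))^{1/2}(\sum_{\bb{k}}P_{\bb{k},m-1}^3(\bb{x}))^{1/2}$, reproduces precisely the error term of \eqref{eq:lem:Leblanc.2012.boundary.Lemma.4}.

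The main obstacle is the remaining contribution of the hypercubes straddling the ``hypotenuse'' facet $\{\bb{y}:\|\bb{y}\|_1=1\}$, i.e.\ those with $m-d<\|\bb{k}\|_1\leq m-1$: a constant fraction of such a cube lies outside $\mathcal{S}_d$, where $f=0\neq f(\bb{x})$, so only the cruder bound $|\rho_{\bb{k}}|\leq Cm^{-d-1}(\sum_{i\in[d]}|k_i-mx_i|+d)+\|f\|_\infty m^{-d}$ holds. The first summand is handled as above; the extra $\|f\|_\infty m^{-d}$ contributes at most $\|f\|_\infty\frac{m^d}{n}\sum_{\|\bb{k}\|_1\geq m-d+1}P_{\bb{k},m-1}^2(\bb{x}) \leq \|f\|_\infty\frac{m^d}{n}\sum_{\|\bb{k}\|_1\geq m-d+1}P_{\bb{k},m-1}(\bb{x}) = \|f\|_\infty\frac{m^d}{n}\PP(\|\bb{K}\|_1\geq m-d+1)$, where $\bb{K}\sim\mathrm{Multinomial}(m-1,\bb{x})$ so that $\|\bb{K}\|_1\sim\mathrm{Binomial}(m-1,\|\bb{x}\|_1)$. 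Since the hypotheses keep $\|\bb{x}\|_1$ bounded away from $1$ (the fixed components $x_i$, $i\notin\mathcal{J}$, remain in $(0,1)$, while those in $\mathcal{J}$ tend to $0$), Hoeffding's inequality bounds this probability by $e^{-c(m-1)}$ for some $c>0$, making the contribution $\oo(n^{-1})$. Collecting the leading term and these error pieces yields \eqref{eq:lem:Leblanc.2012.boundary.Lemma.4}; I expect the straddling-cube estimate — together with the implicit requirement that $\bb{x}$ stay away from the hypotenuse, without which the leading constant $f(\bb{x})$ would have to be replaced by a strictly smaller one to account for the systematic undercounting by the width-$1/m$ binning near that facet — to be the only genuinely delicate point, everything else being a routine Taylor expansion plus Cauchy--Schwarz.
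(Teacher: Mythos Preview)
Your proof is correct and follows essentially the same strategy as the paper: write the variance via the i.i.d.\ structure as $n^{-1}\{m^{2d}\sum_{\bb{k}}p_{\bb{k}}P_{\bb{k},m-1}^2(\bb{x})-(\EE[\widetilde{f}_{n,m}(\bb{x})])^2\}$, expand the bin probabilities to first order around $f(\bb{x})$, and control the linear remainder by Cauchy--Schwarz exactly as you do. The only differences are bookkeeping: the paper bounds the squared-mean term by invoking Lemma~\ref{lem:Leblanc.2012.boundary.Lemma.2} (giving $\EE[\widetilde{f}_{n,m}(\bb{x})]=f(\bb{x})+\OO(m^{-1})$) instead of your cruder $\|f\|_\infty$ bound, and it does not isolate the hypercubes straddling the hypotenuse facet---it implicitly relies on the bulk concentration argument already used in the proof of Lemma~\ref{lem:Leblanc.2012.boundary.Lemma.2}, whereas you make this explicit via Hoeffding.
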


By combining Lemma~\ref{lem:Leblanc.2012.boundary.Lemma.4} and the technical estimate in Lemma~\ref{lem:Leblanc.2012.boundary.Lemma.3}, we get the asymptotics of the variance of the density estimator near the boundary.

\begin{theorem}[Variance of $\widetilde{f}_{n,m}(\bb{x})$ near the boundary of $\mathcal{S}_d$]\label{thm:Leblanc.2012.boundary.Theorem.2}
    Assume \eqref{eq:assump:f.density}.
    For any $\bb{x}\in \mathcal{S}_d$ such that $x_i = \lambda_i / m$ for all $i\in \mathcal{J}\subseteq [d]$ ($\lambda_i\geq 0$ is fixed $\forall i\in \mathcal{J}$) and $x_i\in (0,1)$ is independent of $m$ for all $i\in [d] \backslash \mathcal{J}$, we have, as $n\to \infty$,
    \begin{equation}\label{eq:thm:Leblanc.2012.boundary.Theorem.2}
        \VV(\widetilde{f}_{n,m}(\bb{x})) = n^{-1} m^{(d + |\mathcal{J}|)/2} \left\{v_{\mathcal{J}}(\bb{x}) + \OO_{\bb{\lambda}_{\mathcal{J}}}(m^{-1}) + \oo_{\bb{x}_{[d]\backslash \mathcal{J}}}(1) \, \ind_{\{\mathcal{J} \neq [d]\}}\right\},
    \end{equation}
    where $\bb{x}_{[d]\backslash \mathcal{J}}$ and $\bb{\lambda}_{\mathcal{J}}$ are defined in \eqref{eq:vector.subset}, and where
    \begin{equation}\label{eq:def.v.J.x}
        v_{\mathcal{J}}(\bb{x}) \leqdef \left.f(\bb{x})\right|_{\bb{x}_{\mathcal{J}} = \bb{0}} \, \psi_{[d]\backslash \mathcal{J}}(\bb{x}) \prod_{i\in \mathcal{J}} e^{-2 \lambda_i} I_0(2 \lambda_i).
    \end{equation}
\end{theorem}

By combining Theorem~\ref{thm:Leblanc.2012.boundary.Theorem.1} and Theorem~\ref{thm:Leblanc.2012.boundary.Theorem.2}, we get the asymptotics of the mean squared error of the density estimator near and on the boundary.
In particular, the optimal bandwidth parameter $m$ will depend on the number of components of $\bb{x}$ that are close to the boundary.

\begin{corollary}[MSE of $\widetilde{f}_{n,m}(\bb{x})$ near the boundary of $\mathcal{S}_d$]\label{cor:Leblanc.2012.boundary.Corollary.1}
    Assume \eqref{eq:assump:f.density}.
    For any $\bb{x}\in \mathcal{S}_d$ such that $x_i = \lambda_i / m$ for all $i\in \mathcal{J}\subseteq [d]$ ($\lambda_i\geq 0$ is fixed $\forall i\in \mathcal{J}$) and $x_i\in (0,1)$ is independent of $m$ for all $i\in [d] \backslash \mathcal{J}$, we have, as $n\to \infty$,
    \begin{equation}\label{eq:cor:Leblanc.2012.boundary.Corollary.1}
        \begin{aligned}
            \mathrm{MSE}(\widetilde{f}_{n,m}(\bb{x}))
            &\leqdef \EE\big[(\widetilde{f}_{n,m}(\bb{x}) - f(\bb{x}))^2\big] = \VV(\widetilde{f}_{n,m}(\bb{x})) + \big(\BB[\widetilde{f}_{n,m}(\bb{x})]\big)^2 \\[1.5mm]
            &= n^{-1} m^{(d + |\mathcal{J}|)/2} v_{\mathcal{J}}(\bb{x}) + m^{-2} b_{\mathcal{J}}^2(\bb{x}) + n^{-1} m^{(d + |\mathcal{J}|)/2} \, \big[\OO_{\bb{\lambda}_{\mathcal{J}}}(m^{-1}) \\
            &\quad+ \oo_{\bb{x}_{[d]\backslash \mathcal{J}}}(1) \, \ind_{\{\mathcal{J} \neq [d]\}}\big] + \OO_{\bb{\lambda}_{\mathcal{J}}}(m^{-3}) + \oo_{\bb{\lambda}_{\mathcal{J}}}\big(m^{-2}\big) \, \ind_{\{\mathcal{J}\neq [d]\}},
        \end{aligned}
    \end{equation}
    where $\bb{x}_{\mathcal{J}}$, $\bb{x}_{[d]\backslash \mathcal{J}}$ and $\bb{\lambda}_{\mathcal{J}}$ are defined in \eqref{eq:vector.subset}, $b_{\mathcal{J}}(\bb{x})$ is defined in \eqref{eq:def.b.J.x}, and $v_{\mathcal{J}}(\bb{x})$ is defined in \eqref{eq:def.v.J.x}.
    If the quantity inside the big bracket is non-zero in \eqref{eq:cor:Leblanc.2012.boundary.Corollary.1}, the asymptotically optimal choice of $m$, with respect to MSE, is
    \begin{equation}
        m_{\mathrm{opt}} = n^{\frac{2}{d + |\mathcal{J}| + 4}} \frac{\left\{b_{\mathcal{J}}(\bb{x})\right\}^{\frac{4}{d + |\mathcal{J}| + 4}}}{\left\{\frac{d + |\mathcal{J}|}{4} v_{\mathcal{J}}(\bb{x})\right\}^{\frac{2}{d + |\mathcal{J}| + 4}}},
    \end{equation}
    ($\psi_{[d]\backslash \mathcal{J}}$ is defined in \eqref{eq:vector.subset}) in which case, we have, as $n\to \infty$,
    \begin{equation}
        \begin{aligned}
            \mathrm{MSE}(\widetilde{f}_{n,m_{\mathrm{opt}}}(\bb{x}))
            &= n^{-\frac{4}{d + |\mathcal{J}| + 4}} \frac{\left\{b_{\mathcal{J}}(\bb{x})\right\}^{\frac{2(d + |\mathcal{J}|)}{d + |\mathcal{J}| + 4}}}{\left[\frac{\big(\frac{4}{d + |\mathcal{J}|}\big)^{\frac{4}{d + |\mathcal{J}| + 4}}}{\frac{4}{d + |\mathcal{J}|} + 1}\right] \left\{v_{\mathcal{J}}(\bb{x})\right\}^{\frac{-4}{d + |\mathcal{J}| + 4}}} \\
            &\quad+ \OO_{\bb{\lambda}_{\mathcal{J}}\hspace{-0.2mm},\hspace{0.3mm} \bb{x}_{[d]\backslash \mathcal{J}}}\big(n^{-\frac{6}{d + |\mathcal{J}| + 4}} + \ind_{\{\mathcal{J} \neq [d]\}} n^{-\frac{5}{d + |\mathcal{J}| + 4}}\big) + \oo_{\bb{\lambda}_{\mathcal{J}}}\big(n^{-\frac{4}{d + |\mathcal{J}| + 4}}\big) \, \ind_{\{\mathcal{J}\neq [d]\}}.
        \end{aligned}
    \end{equation}
\end{corollary}

By imposing further conditions on the partial derivatives of $f$, we can remove terms from the bias in Theorem~\ref{thm:Leblanc.2012.boundary.Theorem.1} and obtain another expression for the mean squared error of the density estimator near and on the boundary, and also the corresponding optimal bandwidth parameter $m$ when $\mathcal{J} = [d]$.

\begin{corollary}[MSE of $\widetilde{f}_{n,m}(\bb{x})$ near the boundary of $\mathcal{S}_d$]\label{cor:Leblanc.2012.boundary.Corollary.1.next}
    Assume \eqref{eq:assump:f.density} and also
    \begin{equation}\label{cor:Leblanc.2012.boundary.Corollary.1.next.condition}
        \left.\frac{\partial}{\partial x_i} f(\bb{x})\right|_{\bb{x}_{\mathcal{J}} = \bb{0}} = 0 \quad \forall i\in [d], \quad (\text{\it shoulder condition}) \qquad \left.\frac{\partial^2}{\partial x_i \partial x_j} f(\bb{x})\right|_{\bb{x}_{\mathcal{J}} = \bb{0}} = 0 \quad \forall (i,j)\in ([d]\backslash \mathcal{J})^2,
    \end{equation}
    (in particular, the first bracket in \eqref{eq:thm:Leblanc.2012.boundary.Theorem.1} is zero).
    Then, for any $\bb{x}\in \mathcal{S}_d$ such that $x_i = \lambda_i / m$ for all $i\in \mathcal{J}\subseteq [d]$ ($\lambda_i\geq 0$ is fixed $\forall i\in \mathcal{J}$) and $x_i\in (0,1)$ independent of $m$ for all $i\in [d] \backslash \mathcal{J}$, we have, as $n\to \infty$,
    \begin{equation}\label{eq:cor:Leblanc.2012.boundary.Corollary.1.next}
        \begin{aligned}
            \mathrm{MSE}(\widetilde{f}_{n,m}(\bb{x}))
            &= n^{-1} m^{(d + |\mathcal{J}|)/2} v_{\mathcal{J}}(\bb{x}) + m^{-4} L_{\mathcal{J}}^2(\bb{x}) \\[0.5mm]
            &\quad+ n^{-1} m^{(d + |\mathcal{J}|)/2} \, \big[\OO_{\bb{\lambda}_{\mathcal{J}}}(m^{-1}) + \oo_{\bb{x}_{[d]\backslash \mathcal{J}}}(1) \, \ind_{\{\mathcal{J} \neq [d]\}}\big] \\
            &\quad+ \oo_{\bb{\lambda}_{\mathcal{J}}\hspace{-0.2mm},\hspace{0.3mm} \bb{x}_{[d]\backslash \mathcal{J}}}\big(m^{-4} + \ind_{\{\mathcal{J}\neq [d]\}} m^{-3}\big),
        \end{aligned}
    \end{equation}
    where $\bb{x}_{\mathcal{J}}$, $\bb{x}_{[d]\backslash \mathcal{J}}$ and $\bb{\lambda}_{\mathcal{J}}$ are defined in \eqref{eq:vector.subset}, $v_{\mathcal{J}}(\bb{x})$ is defined in \eqref{eq:def.v.J.x}, and where
    \begin{equation}
        L_{\mathcal{J}}(\bb{x}) \leqdef \sum_{(i,j)\in [d]^2\backslash ([d]\backslash \mathcal{J})^2} \big((\tfrac{1}{6} + \lambda_i) \ind_{\{i = j\}} + (\tfrac{1}{8} + \tfrac{\lambda_j}{2}) \ind_{\{i \neq j\}}\big) \left.\frac{\partial^2}{\partial x_i \partial x_j} f(\bb{x})\right|_{\bb{x}_{\mathcal{J}} = \bb{0}}.
    \end{equation}
    Note that the last error term in \eqref{eq:cor:Leblanc.2012.boundary.Corollary.1.next} is bigger than the main term except when $\mathcal{J} = [d]$.
    Therefore, if $\mathcal{J} = [d]$ and we assume that the quantity inside the big bracket is non-zero in \eqref{eq:cor:Leblanc.2012.boundary.Corollary.1.next}, the asymptotically optimal choice of $m$, with respect to MSE, is
    \begin{equation}
        m_{\mathrm{opt}} = n^{\frac{1}{d + 4}} \frac{\left\{L_{[d]}(\bb{0})\right\}^{\frac{2}{d + 4}}}{\left\{\frac{d}{4} v_{[d]}(\bb{0})\right\}^{\frac{1}{d + 4}}},
    \end{equation}
    so that
    \begin{equation}
        \mathrm{MSE}(\widetilde{f}_{n,m_{\mathrm{opt}}}(\bb{x})) = n^{-\frac{4}{d + 4}} \frac{\left\{L_{[d]}(\bb{0})\right\}^{\frac{2d}{d + 4}}}{\left[\frac{\big(\frac{4}{d}\big)^{\frac{4}{d + 4}}}{\frac{4}{d} + 1}\right] \left\{v_{[d]}(\bb{0})\right\}^{\frac{-4}{d + 4}}} + \oo_{\bb{\lambda}}\big(n^{-\frac{4}{d + 4}}\big), \quad \text{as } n\to \infty.
    \end{equation}
\end{corollary}

\begin{remark}
    The last part shows that, when we assume the smoothness conditions \eqref{cor:Leblanc.2012.boundary.Corollary.1.next.condition}, the optimal rate of the MSE near any vertex of the simplex is of the same order as in the interior of simplex (Corollary~\ref{cor:Leblanc.2012.boundary.Corollary.1} with $\mathcal{J} = \emptyset$) without the smoothness conditions.
\end{remark}

\begin{remark}
    In order to optimize $m$ when $\mathcal{J} \neq [d]$ in Corollary~\ref{cor:Leblanc.2012.boundary.Corollary.1.next}, we would need a more precise expression for the bias in Theorem~\ref{eq:thm:Leblanc.2012.boundary.Theorem.1} by assuming more regularity conditions on $f$ than we did in \eqref{eq:assump:f.density}.
\end{remark}

\section{Results for the c.d.f.\ estimator \texorpdfstring{$F_{n,m}^{\star}$}{Fn,m star}}\label{sec:results.cdf.estimator}

For every result stated in this section, we will make the following assumption:
\begin{equation}\label{eq:assump:F.cdf}
    \bullet \quad \text{$F$ is three-times continuously differentiable on $\mathcal{S}_d$.}
\end{equation}
Below, we obtain a general expression for the bias of the c.d.f.\ estimator on the simplex, and then near the boundary.

\begin{lemma}[Bias of $F_{n,m}^{\star}(\bb{x})$ on $\mathcal{S}_d$]\label{lem:Leblanc.2012.boundary.Lemma.6}
    Assume \eqref{eq:assump:F.cdf}.
    Then, uniformly for $\bb{x}\in \mathcal{S}_d$, we have, as $m\to \infty$,
    \begin{equation}\label{eq:lem:Leblanc.2012.boundary.Lemma.6}
        \begin{aligned}
            &\EE[F_{n,m}^{\star}(\bb{x})] \\
            &\quad= F(\bb{x}) + \frac{1}{2m} \sum_{i,j\in [d]} (x_i \ind_{\{i = j\}} - x_i x_j) \, \frac{\partial^2}{\partial x_i \partial x_j} F(\bb{x}) \\
            &\qquad+ \frac{1}{6m^2} \sum_{i,j,\ell\in [d]} \left(2 x_i x_j x_{\ell} - \ind_{\{i = j\}} x_i x_{\ell} - \ind_{\{j = \ell\}} x_i x_j - \ind_{\{i = \ell\}} x_j x_{\ell} + \ind_{\{i = j = \ell\}} x_i\right) \frac{\partial^3}{\partial x_i \partial x_j \partial x_{\ell}} F(\bb{x}) \\
            &\qquad+ \frac{1}{m^3} \sum_{i,j,\ell\in [d]} \oo\left(\big(\EE\big[|\xi_i - m \, x_i|^2\big]\big)^{1/2} \big(\EE\big[|\xi_j - m \, x_j|^4\big]\big)^{1/4} \big(\EE\big[|\xi_{\ell} - m \, x_{\ell}|^4\big]\big)^{1/4}\right),
        \end{aligned}
    \end{equation}
    where $\bb{\xi} = (\xi_1,\xi_2,\dots,\xi_d)\sim \mathrm{Multinomial}\hspace{0.2mm}(m,\bb{x})$.
\end{lemma}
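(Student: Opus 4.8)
The plan is to start from the observation that, since $\bb{X}_1,\dots,\bb{X}_n$ are i.i.d.\ with c.d.f.\ $F$, we have $\EE[F_n(\bb{k}/m)] = F(\bb{k}/m)$ for every $\bb{k}$, so by linearity of expectation
\[
    \EE[F_{n,m}^{\star}(\bb{x})] = \sum_{\bb{k}\in \N_0^d \cap m\mathcal{S}_d} F(\bb{k}/m)\, P_{\bb{k},m}(\bb{x}) = \EE\big[F(\bb{\xi}/m)\big], \qquad \bb{\xi}\sim \mathrm{Multinomial}(m,\bb{x}),
\]
the last equality holding because the weights $P_{\bb{k},m}(\bb{x})$ are exactly the probability mass function of the $\mathrm{Multinomial}(m,\bb{x})$ law on $\N_0^d \cap m\mathcal{S}_d$. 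Thus the whole problem reduces to a third-order Taylor expansion of $F$ around $\bb{x}$ and the computation of the first three central moments of the multinomial distribution. Since $F\in C^3(\mathcal{S}_d)$ and the segment joining $\bb{x}$ and $\bb{\xi}/m$ stays inside the convex set $\mathcal{S}_d$ (both endpoints lie in $\mathcal{S}_d$), Taylor's theorem with integral remainder gives
\[
    F(\bb{\xi}/m) = \sum_{|\alpha|\le 2}\frac{D^{\alpha}F(\bb{x})}{\alpha!}\Big(\tfrac{\bb{\xi}}{m}-\bb{x}\Big)^{\alpha} + \sum_{|\alpha|=3}\frac{3}{\alpha!}\Big(\tfrac{\bb{\xi}}{m}-\bb{x}\Big)^{\alpha}\int_0^1 (1-t)^2\, D^{\alpha}F\big(\bb{x}+t(\tfrac{\bb{\xi}}{m}-\bb{x})\big)\,\rd t,
\]
and I would then split $D^{\alpha}F(\bb{x}+t(\tfrac{\bb{\xi}}{m}-\bb{x})) = D^{\alpha}F(\bb{x}) + \big[D^{\alpha}F(\bb{x}+t(\tfrac{\bb{\xi}}{m}-\bb{x})) - D^{\alpha}F(\bb{x})\big]$ to peel off an explicit third-order term, leaving a remainder governed by the modulus of continuity of the third partials of $F$.

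Taking expectations term by term, the order-$0$ term gives $F(\bb{x})$; the order-$1$ term vanishes because $\EE[\xi_i/m - x_i] = 0$; the order-$2$ term gives $\tfrac{1}{2m}\sum_{i,j\in[d]}(x_i\ind_{\{i=j\}} - x_i x_j)\,\tfrac{\partial^2}{\partial x_i \partial x_j}F(\bb{x})$, since $\EE[(\tfrac{\xi_i}{m}-x_i)(\tfrac{\xi_j}{m}-x_j)] = m^{-1}(x_i\ind_{\{i=j\}}-x_i x_j)$; and the peeled-off third-order term contributes $\tfrac{1}{6m^2}$ times the stated combination after substituting the third central moments
\[
    \EE\big[(\xi_i-mx_i)(\xi_j-mx_j)(\xi_{\ell}-mx_{\ell})\big] = m\big(2x_ix_jx_{\ell} - \ind_{\{i=j\}}x_ix_{\ell} - \ind_{\{j=\ell\}}x_ix_j - \ind_{\{i=\ell\}}x_jx_{\ell} + \ind_{\{i=j=\ell\}}x_i\big).
\]
This moment identity I would obtain by writing $\xi_i = \sum_{t=1}^m Z_{t,i}$ with the single multinomial trials $(Z_{t,\cdot})_t$ i.i.d.: by independence and centering only the diagonal $t_1=t_2=t_3$ survives the triple sum, and the single-trial moment is computed using $Z_{1,i}^2 = Z_{1,i}$ and $Z_{1,i}Z_{1,j}=0$ for $i\neq j$ (alternatively, this is one of the multinomial moment estimates collected in Appendix~\ref{sec:tech.lemmas}).

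It then remains to bound the expectation of the remainder, which has the shape $\tfrac{1}{m^3}\sum_{i,j,\ell}c_{ij\ell}\,\EE\big[(\xi_i-mx_i)(\xi_j-mx_j)(\xi_{\ell}-mx_{\ell})\,H_{ij\ell}(\bb{\xi})\big]$ with $|H_{ij\ell}(\bb{\xi})| \le \tfrac13\sup_{t\in[0,1]}\big|\tfrac{\partial^3}{\partial x_i\partial x_j\partial x_{\ell}}F(\bb{x}+t(\tfrac{\bb{\xi}}{m}-\bb{x})) - \tfrac{\partial^3}{\partial x_i\partial x_j\partial x_{\ell}}F(\bb{x})\big| \le \tfrac13\,\omega\big(\|\tfrac{\bb{\xi}}{m}-\bb{x}\|_{\infty}\big)$, where $\omega$ is the ($\bb{x}$-independent) modulus of continuity on $\mathcal{S}_d$ of the third partials of $F$. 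For a fixed $\e_0>0$, I would use $\omega(s) \le \omega(\e_0) + 2\|\tfrac{\partial^3}{\partial x_i\partial x_j\partial x_{\ell}}F\|_{\infty}\,\ind_{\{s>\e_0\}}$: on the first piece, Hölder's inequality with exponents $(2,4,4)$ yields exactly $\tfrac13\omega(\e_0)\,(\EE|\xi_i-mx_i|^2)^{1/2}(\EE|\xi_j-mx_j|^4)^{1/4}(\EE|\xi_{\ell}-mx_{\ell}|^4)^{1/4}$; on the second piece, Cauchy--Schwarz together with the polynomial growth of the multinomial central moments and a uniform Hoeffding bound $\PP(\|\tfrac{\bb{\xi}}{m}-\bb{x}\|_{\infty} > \e_0) \le 2d\,e^{-2m\e_0^2}$ make that contribution exponentially small, hence negligible against the first. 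Letting $\e_0\downarrow 0$ after $m\to\infty$ turns the $\omega(\e_0)$ factor into a genuine $\oo(1)$, giving precisely the claimed error term uniformly over $\bb{x}\in\mathcal{S}_d$. I expect this last step to be the main obstacle: keeping the remainder estimate uniform over the whole simplex — in particular near the boundary, where some $x_i$ are of order $m^{-1}$ so the corresponding central moments stay bounded rather than growing with $m$ — and checking that isolating the large-deviation event does not spoil the precise $(2,4,4)$-Hölder structure appearing in the statement.
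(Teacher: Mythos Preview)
Your proposal is correct and follows essentially the same route as the paper's proof: write $\EE[F_{n,m}^{\star}(\bb{x})]=\EE[F(\bb{\xi}/m)]$, Taylor-expand $F$ to third order around $\bb{x}$, insert the multinomial central moments of Lemma~\ref{lem:Leblanc.2012.boundary.Lemma.1}, and control the remainder via Hölder with exponents $(2,4,4)$. The only cosmetic difference is the treatment of the remainder: the paper first truncates to the bulk $\{\|\bb{k}/m-\bb{x}\|_1\le\delta_n\}$ (the outside being negligible by concentration) and then uses that the third-order Taylor remainder is $\oo(\|\bb{k}/m-\bb{x}\|_1^3)$ inside, whereas you keep the full integral remainder on the convex set $\mathcal{S}_d$ and split afterwards on the large-deviation event $\{\|\bb{\xi}/m-\bb{x}\|_\infty>\e_0\}$. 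These are equivalent bookkeeping devices, and your worry about uniformity near the boundary is well placed — the paper is equally terse there, simply writing the outside-bulk contribution as $\oo(m^{-2})$ and leaving the absorption into the stated $(2,4,4)$-Hölder error implicit.
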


\newpage
\begin{theorem}[Bias of $F_{n,m}^{\star}(\bb{x})$ near the boundary of $\mathcal{S}_d$]\label{thm:Leblanc.2012.boundary.Theorem.3}
    Assume \eqref{eq:assump:F.cdf}.
    For any $\bb{x}\in \mathcal{S}_d$ such that $x_i = \lambda_i / m$ for all $i\in \mathcal{J}\subseteq [d]$ ($\lambda_i > 0$ is fixed $\forall i\in \mathcal{J}$) and $x_i\in (0,1)$ is independent of $m$ for all $i\in [d] \backslash \mathcal{J}$, we have, as $n\to \infty$,
    \begin{equation}\label{eq:thm:Leblanc.2012.boundary.Theorem.3}
        \BB(F_{n,m}^{\star}(\bb{x})) = m^{-1} B_{\mathcal{J}}(\bb{x}) + \OO_{\bb{\lambda}_{\mathcal{J}}}(m^{-3}) + \oo_{\bb{\lambda}_{\mathcal{J}}}(m^{-3/2}) \ind_{\{\mathcal{J}\neq [d]\}},
    \end{equation}
    where
    \begin{equation}\label{eq:def.BB.J.x}
        B_{\mathcal{J}}(\bb{x}) \leqdef \frac{1}{2} \sum_{i,j\in [d]\backslash \mathcal{J}} (x_i \ind_{\{i = j\}} - x_i x_j) \left. \frac{\partial^2}{\partial x_i \partial x_j} F(\bb{x})\right|_{\bb{x}_{\mathcal{J}} = \bb{0}} \hspace{-2mm} + \frac{1}{2 m} \sum_{i\in [d]} \lambda_i \left. \frac{\partial^2}{\partial x_i^2} F(\bb{x})\right|_{\bb{x} = \bb{0}}.
    \end{equation}
    In the case where $x_i = 0$ for some $i\in \mathcal{J}\neq \emptyset$, notice that $\BB(F_{n,m}^{\star}(\bb{x})) = 0$ because $F_{n,m}^{\star}(\bb{x}) = 0$ a.s.
\end{theorem}

Next, we obtain a general expression for the variance of the c.d.f.\ estimator on the simplex.

\begin{lemma}[Variance of $F_{n,m}^{\star}(\bb{x})$ on $\mathcal{S}_d$]\label{lem:Leblanc.2012.boundary.Lemma.8}
    Assume \eqref{eq:assump:F.cdf}.
    Then, uniformly for $\bb{x}\in \mathcal{S}_d$, we have, as $n\to \infty$,
    \begin{equation}\label{eq:lem:Leblanc.2012.boundary.Lemma.8}
        \begin{aligned}
            \VV(F_{n,m}^{\star}(\bb{x}))
            &= n^{-1} F(\bb{x}) (1 - F(\bb{x})) - n^{-1} \sum_{i\in [d]} \frac{\partial}{\partial x_i} F(\bb{x}) \hspace{-2mm} \sum_{\bb{k},\bb{\ell}\in \N_0^d \cap m\mathcal{S}_d} \hspace{-3mm} ((k_i \wedge \ell_i) / m - x_i) P_{\bb{k},m}(\bb{x}) P_{\bb{\ell},m}(\bb{x}) \\
            &\quad+ n^{-1} \sum_{i,j\in [d]} \OO\big(m^{-1} (x_i \ind_{\{i = j\}} - x_i x_j)\big) + \OO(n^{-1} m^{-2}).
        \end{aligned}
    \end{equation}
\end{lemma}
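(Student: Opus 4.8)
The plan is to reduce the variance of $F_{n,m}^{\star}(\bb{x})$ to a single‑observation computation and then expand $F$ against the weights of a $\mathrm{Multinomial}(m,\bb{x})$ law, in the spirit of the one‑dimensional argument in \cite{MR2925964} and the interior analysis in \cite{MR4287788}. Write $F_{n,m}^{\star}(\bb{x}) = n^{-1}\sum_{r=1}^{n}Y_r$ with $Y_r \leqdef \sum_{\bb{k}\in\N_0^d\cap m\mathcal{S}_d}\ind_{(-\bb{\infty},\bb{k}/m]}(\bb{X}_r)\,P_{\bb{k},m}(\bb{x})$; since the $Y_r$ are i.i.d., $\VV(F_{n,m}^{\star}(\bb{x})) = n^{-1}\VV(Y_1) = n^{-1}\big(\EE[Y_1^2]-(\EE[Y_1])^2\big)$. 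The identity $\ind_{(-\bb{\infty},\bb{k}/m]}(\bb{X}_1)\,\ind_{(-\bb{\infty},\bb{\ell}/m]}(\bb{X}_1) = \ind_{(-\bb{\infty},(\bb{k}\wedge\bb{\ell})/m]}(\bb{X}_1)$, with $\bb{k}\wedge\bb{\ell}$ the componentwise minimum, yields $\EE[Y_1^2] = \sum_{\bb{k},\bb{\ell}}F((\bb{k}\wedge\bb{\ell})/m)P_{\bb{k},m}(\bb{x})P_{\bb{\ell},m}(\bb{x})$ and $\EE[Y_1] = \sum_{\bb{k}}F(\bb{k}/m)P_{\bb{k},m}(\bb{x})$, so that
\[
    \VV(Y_1) = \sum_{\bb{k},\bb{\ell}\in\N_0^d\cap m\mathcal{S}_d}\big[F((\bb{k}\wedge\bb{\ell})/m) - F(\bb{k}/m)\,F(\bb{\ell}/m)\big]\,P_{\bb{k},m}(\bb{x})\,P_{\bb{\ell},m}(\bb{x}).
\]

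Next I would introduce independent $\bb{\xi},\bb{\eta}\sim\mathrm{Multinomial}(m,\bb{x})$, so the sum above is $\EE\big[F((\bb{\xi}\wedge\bb{\eta})/m) - F(\bb{\xi}/m)F(\bb{\eta}/m)\big]$; note $\N_0^d\cap m\mathcal{S}_d$ is exactly the support, so no bulk truncation is needed here. Set $\bb{b} = \bb{\xi}/m-\bb{x}$, $\bb{c} = \bb{\eta}/m-\bb{x}$, $\bb{a} = (\bb{\xi}\wedge\bb{\eta})/m-\bb{x} = \bb{b}\wedge\bb{c}$, and Taylor‑expand $F$ about $\bb{x}$ to second order with a third‑order Lagrange remainder (valid since $F\in C^3(\mathcal{S}_d)$ and $\mathcal{S}_d$ is convex, so every segment $[\bb{x},\bb{k}/m]$ stays inside $\mathcal{S}_d$). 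The constant terms give $F(\bb{x})-F(\bb{x})^2 = F(\bb{x})(1-F(\bb{x}))$. The first‑order term on the minimum side gives $\sum_{i\in[d]}\partial_iF(\bb{x})\,\EE[a_i] = \sum_{i\in[d]}\partial_iF(\bb{x})\sum_{\bb{k},\bb{\ell}}((k_i\wedge\ell_i)/m-x_i)P_{\bb{k},m}(\bb{x})P_{\bb{\ell},m}(\bb{x})$, which is the $\min$‑correction term in \eqref{eq:lem:Leblanc.2012.boundary.Lemma.8}; crucially it does \emph{not} vanish (in contrast to the interior expansion), because $\EE[a_i] = \EE[(\xi_i\wedge\eta_i)/m]-x_i = -\tfrac{1}{2m}\EE[|\xi_i-\eta_i|]\le 0$ (using $\xi_i\wedge\eta_i = \tfrac{1}{2}(\xi_i+\eta_i-|\xi_i-\eta_i|)$), whereas $\EE[b_i] = \EE[\xi_i/m]-x_i = 0$ kills the corresponding first‑order term on the product side, as does independence for the cross term between the $\bb{b}$ and $\bb{c}$ expansions.

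It remains to bound the rest. The product side contributes $-F(\bb{x})\sum_{i,j\in[d]}\partial_{ij}F(\bb{x})\,\EE[b_ib_j]$ at second order, which equals $-m^{-1}F(\bb{x})\sum_{i,j}(x_i\ind_{\{i=j\}}-x_ix_j)\partial_{ij}F(\bb{x})$ by the multinomial second‑moment identity $\EE[(\xi_i-mx_i)(\xi_j-mx_j)] = m(x_i\ind_{\{i=j\}}-x_ix_j)$, i.e.\ exactly the $\sum_{i,j}\OO(m^{-1}(x_i\ind_{\{i=j\}}-x_ix_j))$ term; its higher‑order contributions and the square of the $\OO(m^{-1})$ part are $\OO(m^{-2})$. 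The minimum side contributes $\tfrac{1}{2}\sum_{i,j}\partial_{ij}F(\bb{x})\,\EE[a_ia_j]$ plus third‑order terms and a remainder, all of which I would control using $|a_i|\le|b_i|+|c_i|$ (so $\EE[a_i^2]\le 2\EE[b_i^2] = 2m^{-1}x_i(1-x_i)$ and $\EE[\,\|\bb{a}\|^2\,]\le 2\EE[\,\|\bb{b}\|^2\,]$), the boundedness $|a_i|,|b_i|\le 1$ to dominate higher powers by $\EE[a_i^2]$, a slicing argument for the Lagrange remainder $\EE[\,\|\bb{a}\|^3\,\omega(\|\bb{a}\|)\,]$ (with $\omega$ the modulus of continuity of the third derivatives of $F$), and exponential concentration (Bernstein/Hoeffding) for the multinomial to discard the far tail; every such bound is $\OO(m^{-1}\sum_i x_i(1-x_i))$, which sits inside $\sum_{i,j}\OO(m^{-1}(x_i\ind_{\{i=j\}}-x_ix_j))$. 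Dividing by $n$ yields \eqref{eq:lem:Leblanc.2012.boundary.Lemma.8}.

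The part needing the most care is making every remainder estimate carry the correct $\bb{x}$‑dependence — the $x_i(1-x_i)$‑type weights, not merely powers of $m$ — \emph{uniformly} over $\bb{x}\in\mathcal{S}_d$ up to and including the boundary, since near the boundary the target error is an order of magnitude smaller than a naive $\OO(n^{-1}m^{-1})$ bound; in particular one must verify that the moments of the componentwise minimum $\bb{\xi}\wedge\bb{\eta}$ inherit the boundary decay of the moments of $\bb{\xi}$, which is precisely where the elementary bound $|a_i|\le|b_i|+|c_i|$ and the slicing of $\EE[\,\|\bb{a}\|^3\,\omega(\|\bb{a}\|)\,]$ do the work. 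Everything else is routine bookkeeping of orders in $m$ and $n$.
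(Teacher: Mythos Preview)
Your proposal is correct and follows essentially the same route as the paper: reduce to a single observation, write $\VV(Y_1)=\sum_{\bb{k},\bb{\ell}}F((\bb{k}\wedge\bb{\ell})/m)P_{\bb{k},m}P_{\bb{\ell},m}-(\EE Y_1)^2$, Taylor-expand $F$ around $\bb{x}$, and control the second- and higher-order pieces with the multinomial moment identities. The only cosmetic difference is that the paper handles $(\EE Y_1)^2$ by quoting the bias expansion of Lemma~\ref{lem:Leblanc.2012.boundary.Lemma.6} rather than re-deriving it inline, and it does not invoke any modulus-of-continuity or concentration slicing for the remainder (your simpler observation $|a_i|\le 1\Rightarrow \EE|a_i|^3\le \EE a_i^2$ already suffices).
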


By combining Lemma~\ref{lem:Leblanc.2012.boundary.Lemma.7} and Lemma~\ref{lem:Leblanc.2012.boundary.Lemma.8}, we get the asymptotics of the variance of the c.d.f.\ estimator near the boundary.

\begin{theorem}[Variance of $F_{n,m}^{\star}(\bb{x})$ near the boundary of $\mathcal{S}_d$]\label{thm:Leblanc.2012.boundary.Theorem.4}
    Assume \eqref{eq:assump:F.cdf}.
    For any $\bb{x}\in \mathcal{S}_d$ such that $x_i = \lambda_i / m$ for all $i\in \mathcal{J}\subseteq [d]$ ($\lambda_i > 0$ is fixed $\forall i\in \mathcal{J}$) and $x_i\in (0,1)$ is independent of $m$ for all $i\in [d] \backslash \mathcal{J}$, we have, as $n\to \infty$,
    \begin{equation}\label{eq:thm:Leblanc.2012.boundary.Theorem.4}
        \begin{aligned}
            \VV(F_{n,m}^{\star}(\bb{x}))
            &= n^{-1} m^{-1} V_{\mathcal{J}}(\bb{x}) + n^{-1} F(\bb{x}) (1 - F(\bb{x})) \, \ind_{\{\mathcal{J} = \emptyset\}} \\
            &\quad+ \OO_{\bb{\lambda}_{\mathcal{J}}}\big(n^{-1} m^{-2}\big) + \oo_{\bb{\lambda}_{\mathcal{J}}\hspace{-0.2mm},\hspace{0.3mm} \bb{x}_{[d]\backslash \mathcal{J}}}\big(n^{-1} m^{-1/2}\big) \, \ind_{\{\mathcal{J} \neq [d]\}},
        \end{aligned}
    \end{equation}
    where
    \begin{equation}\label{eq:def.VV.J.x}
        V_{\mathcal{J}}(\bb{x}) \leqdef \sum_{i\in [d]} \big.\frac{\partial}{\partial x_i} F(\bb{x})\big|_{\bb{x}_{\mathcal{J}} = \bb{0}} \left\{\hspace{-1mm}
            \begin{array}{l}
                \lambda_i \, \big(1 - e^{-2 \lambda_i} (I_0(2 \lambda_i) + I_1(2\lambda_i))\big) \ind_{\{i\in \mathcal{J}\}} \\[1mm]
                - m^{1/2} \sqrt{\pi^{-1} x_i (1 - x_i)} \ind_{\{i\in [d]\backslash \mathcal{J}\}}
            \end{array}
            \hspace{-1mm}\right\}.
    \end{equation}
    In the case where $x_i = 0$ for some $i\in \mathcal{J}\neq \emptyset$, notice that $\VV(F_{n,m}^{\star}(\bb{x})) = 0$ because $F_{n,m}^{\star}(\bb{x}) = 0$ a.s.
\end{theorem}

By combining Theorem~\ref{thm:Leblanc.2012.boundary.Theorem.3} and Theorem~\ref{thm:Leblanc.2012.boundary.Theorem.4}, we get the asymptotics of the mean squared error of the c.d.f.\ estimator near the boundary.

\begin{corollary}[MSE of $F_{n,m}^{\star}(\bb{x})$ near the boundary of $\mathcal{S}_d$]
    Assume \eqref{eq:assump:F.cdf}.
    For any $\bb{x}\in \mathcal{S}_d$ such that $x_i = \lambda_i / m$ for all $i\in \mathcal{J}\subseteq [d]$ ($\lambda_i > 0$ is fixed $\forall i\in \mathcal{J}$) and $x_i\in (0,1)$ is independent of $m$ for all $i\in [d] \backslash \mathcal{J}$, we have, as $n\to \infty$,
    \begin{equation}
        \begin{aligned}
            \mathrm{MSE}(F_{n,m}^{\star}(\bb{x}))
            &=  n^{-1} m^{-1} V_{\mathcal{J}}(\bb{x}) + n^{-1} F(\bb{x}) (1 - F(\bb{x})) \, \ind_{\{\mathcal{J} = \emptyset\}} + m^{-2} B_{\mathcal{J}}^2(\bb{x}) \\
            &\quad+ \OO_{\bb{\lambda}_{\mathcal{J}}}\big(n^{-1} m^{-2}\big) + \oo_{\bb{\lambda}_{\mathcal{J}}\hspace{-0.2mm},\hspace{0.3mm} \bb{x}_{[d]\backslash \mathcal{J}}}\big(n^{-1} m^{-1/2}\big) \, \ind_{\{\mathcal{J} \neq [d]\}} \\
            &\quad+ \OO_{\bb{\lambda}_{\mathcal{J}}}(m^{-4}) + \oo_{\bb{\lambda}_{\mathcal{J}}}(m^{-5/2}) \ind_{\{\mathcal{J}\neq [d]\}},
        \end{aligned}
    \end{equation}
    where $B_{\mathcal{J}}(\bb{x})$ is defined in \eqref{eq:def.BB.J.x} and $V_{\mathcal{J}}(\bb{x})$ is defined in \eqref{eq:def.VV.J.x}.
    In the case where $x_i = 0$ for some $i\in \mathcal{J}\neq \emptyset$, notice that $\mathrm{MSE}(F_{n,m}^{\star}(\bb{x})) = 0$ because $F_{n,m}^{\star}(\bb{x}) = 0$ a.s.
    Furthermore, as pointed out by \cite[p.2772]{MR2925964} for $d=1$, there is no optimal $m$ with respect to the MSE when $\mathcal{J} \neq \emptyset$.
    This is also true here.
    The remaining case $\mathcal{J} = \emptyset$ (i.e., when $\bb{x}$ is far from the boundary in every component) was already treated in Corollary~1 of \cite{MR4287788}.
\end{corollary}

\section{Proof of the results for the density estimator \texorpdfstring{$\widetilde{f}_{n,m}$}{hat(f)n,m}}\label{sec:proofs.results.density.estimator}

\subsection{Proof of Lemma~\ref{lem:Leblanc.2012.boundary.Lemma.2}}

    Take $\delta_n \searrow 0$ slow enough as $n\to \infty$ (for example $\delta_n \geq m^{-1/4}$) that standard concentration bounds for the binomial distribution yield
    \begin{equation}\label{eq:contribution.outside.bulk}
        \sum_{\substack{\bb{k}\in \mathbb{N}_0^d \cap m \mathcal{S}_d \\ \|\bb{k} / m - \bb{x}\|_1 > \delta_n}} \hspace{-3mm}P_{\bb{k},m}(\bb{x}) \leq \sum_{\ell\in [d]} \sum_{\substack{\bb{k}\in \mathbb{N}_0^d \cap m \mathcal{S}_d \\ |k_{\ell} / m - x_{\ell}| > \delta_n / d}} \hspace{-3mm}P_{\bb{k},m}(\bb{x})\leq C_d e^{-c_d \delta_n^2 m} = \oo(m^{-2}),
    \end{equation}
    for some appropriate constants $c_d,C_d > 0$.
    For any $\bb{k}$ such that $\|\bb{k} / m - \bb{x}\|_1 \leq \delta_n$, we can use Taylor expansions and our assumption that $f$ is twice continuously differentiable to obtain
    \begin{align*}\label{thm:Theorem.3.2.and.3.3.Babu.Canty.Chaubey.second.asymp.begin.1}
        &m^d \int_{(\frac{\bb{k}}{m}, \frac{\bb{k} + 1}{m}]} \hspace{-0.5mm} f(\bb{y}) \rd \bb{y} - f(\bb{x}) \notag \\
        &= f(\bb{k} / m) - f(\bb{x}) + \frac{1}{2m} \sum_{i\in [d]} \frac{\partial}{\partial x_i} f(\bb{k} / m) + \frac{1}{m^2} \sum_{i,j\in [d]} \big(\tfrac{1}{6} \ind_{\{i = j\}} + \tfrac{1}{8} \ind_{\{i \neq j\}}\big) \, \frac{\partial^2}{\partial x_i \partial x_j} f(\bb{k} / m) (1 + \oo(1)) \notag \\
        &= \frac{1}{m} \sum_{i\in [d]} (k_i - m x_i) \, \frac{\partial}{\partial x_i} f(\bb{x}) + \frac{1}{2 m^2} \sum_{i,j\in [d]} (k_i - m x_i) (k_j - m x_j) \, \frac{\partial^2}{\partial x_i \partial x_j} f(\bb{x}) (1 + \oo(1)) \notag \\
        &\quad+ \frac{1}{2m} \sum_{i\in [d]} \frac{\partial}{\partial x_i} f(\bb{x}) + \frac{1}{2 m^2} \sum_{i,j\in [d]} (k_j - m x_j) \, \frac{\partial^2}{\partial x_i \partial x_j} f(\bb{x}) (1 + \oo(1)) \notag \\
        &\quad+ \frac{1}{m^2} \sum_{i,j\in [d]} \big(\tfrac{1}{6} \ind_{\{i = j\}} + \tfrac{1}{8} \ind_{\{i \neq j\}}\big) \, \frac{\partial^2}{\partial x_i \partial x_j} f(\bb{x}) (1 + \oo(1)) \notag \\
        &= \frac{1}{m} \sum_{i\in [d]} (k_i - (m - 1) x_i) \, \frac{\partial}{\partial x_i} f(\bb{x}) + \frac{1}{m} \sum_{i\in [d]} \big(\tfrac{1}{2} - x_i\big) \, \frac{\partial}{\partial x_i} f(\bb{x}) \\
        &\quad+ \frac{1}{2 m^2} \sum_{i,j\in [d]} (k_i - (m - 1) x_i) (k_j - (m - 1) x_j) \, \frac{\partial^2}{\partial x_i \partial x_j} f(\bb{x}) \notag \\
        &\quad- \frac{1}{2 m^2} \sum_{i,j\in [d]} \big[2 x_i \, (k_j - (m - 1) x_j) - x_i x_j\big] \, \frac{\partial^2}{\partial x_i \partial x_j} f(\bb{x}) + \frac{1}{2 m^2} \sum_{i,j\in [d]} (k_j - (m - 1) x_j) \, \frac{\partial^2}{\partial x_i \partial x_j} f(\bb{x}) \notag \\
        &\quad+ \frac{1}{m^2} \sum_{i,j\in [d]} \big(\tfrac{1}{6} \ind_{\{i = j\}} + \tfrac{1}{8} \ind_{\{i \neq j\}} - \tfrac{1}{2} x_j\big) \, \frac{\partial^2}{\partial x_i \partial x_j} f(\bb{x}) \\
        &\quad+ \frac{1}{m^2} \sum_{i\in [d]} \oo\big(1 + |k_i - (m - 1) x_i| + |k_i - (m - 1) x_i|^2\big).
    \end{align*}
    If we multiply the last expression by $P_{\bb{k},m-1}(\bb{x})$ and sum over $\bb{k}\in \N_0^d \cap (m-1)\mathcal{S}_d$, then the joint moments from Lemma~ \ref{lem:Leblanc.2012.boundary.Lemma.1} and Jensen's inequality yield
    \begin{equation}\label{thm:Theorem.3.2.and.3.3.Babu.Canty.Chaubey.second.asymp.end}
        \begin{aligned}
            \EE[\widetilde{f}_{n,m}(\bb{x})] - f(\bb{x})
            &= \frac{1}{m} \sum_{i\in [d]} \big(\tfrac{1}{2} - x_i\big) \, \frac{\partial}{\partial x_i} f(\bb{x}) + \frac{m-1}{2 m^2} \sum_{i,j\in [d]} \big(x_i \ind_{\{i = j\}} - x_i x_j\big) \, \frac{\partial^2}{\partial x_i \partial x_j} f(\bb{x}) \\
            &\quad+ \frac{1}{m^2} \sum_{i,j\in [d]} \left\{\tfrac{1}{6} \ind_{\{i = j\}} + \tfrac{1}{8} \ind_{\{i \neq j\}} - \tfrac{1}{2} x_j + \tfrac{1}{2} x_i x_j\right\} \frac{\partial^2}{\partial x_i \partial x_j} f(\bb{x}) \\
            &\quad+ \frac{1}{m^2} \sum_{i\in [d]} \oo\left(1 + \sqrt{(m-1) x_i (1 - x_i)} + (m - 1) x_i (1 - x_i)\right).
        \end{aligned}
    \end{equation}
    The conclusion follows.

\subsection{Proof of Theorem~\ref{thm:Leblanc.2012.boundary.Theorem.1}}

    Take $\bb{x}$ as in the statement of the theorem.
    Using the notation from \eqref{eq:def:b.x}, we have
    \begin{equation}\label{eq:thm:Leblanc.2012.boundary.Theorem.1.eq.Delta.1}
        \begin{aligned}
            \Delta_1(\bb{x})
            &= \sum_{i\in \mathcal{J}} \big(\tfrac{1}{2} - \tfrac{\lambda_i}{m}\big) \, \frac{\partial}{\partial x_i} f(\bb{x}) + \sum_{i\in \mathcal{J}} \frac{\lambda_i}{2m} \frac{\partial^2}{\partial x_i^2} f(\bb{x}) + \hspace{-1mm} \sum_{i\in [d]\backslash \mathcal{J}} \big(\tfrac{1}{2} - x_i\big) \, \frac{\partial}{\partial x_i} f(\bb{x}) - \hspace{-1mm} \sum_{\substack{i\in \mathcal{J} \\ j\in [d]\backslash \mathcal{J}}} \frac{\lambda_i x_j}{m} \frac{\partial^2}{\partial x_i \partial x_j} f(\bb{x}) \\
            &\quad+ \sum_{i,j\in [d]\backslash \mathcal{J}} \tfrac{1}{2} \big(x_i \ind_{\{i = j\}} - x_i x_j\big) \, \frac{\partial^2}{\partial x_i \partial x_j} f(\bb{x}) + \OO_{\bb{\lambda}_{\mathcal{J}}}(m^{-2}),
        \end{aligned}
    \end{equation}
    and
    \begin{equation}\label{eq:thm:Leblanc.2012.boundary.Theorem.1.eq.Delta.2}
        \begin{aligned}
            \Delta_2(\bb{x})
            &= \sum_{(i,j)\in [d]^2\backslash ([d]\backslash \mathcal{J})^2} \big(\tfrac{1}{6} \ind_{\{i = j\}} + \tfrac{1}{8} \ind_{\{i \neq j\}}\big) \, \frac{\partial^2}{\partial x_i \partial x_j} f(\bb{x}) \\
            &\quad+ \sum_{i,j\in [d]\backslash \mathcal{J}} \left\{\tfrac{1}{6} \ind_{\{i = j\}} + \tfrac{1}{8} \ind_{\{i \neq j\}} - \tfrac{1}{2} x_i \ind_{\{i = j\}} - \tfrac{1}{2} x_j + x_i x_j\right\} \frac{\partial^2}{\partial x_i \partial x_j} f(\bb{x}) + \OO_{\bb{\lambda}_{\mathcal{J}}}(m^{-1}).
        \end{aligned}
    \end{equation}
    For all $i,j\in [d]$, note that
    \begin{align}\label{eq:thm:Leblanc.2012.boundary.Theorem.1.eq.Taylor}
        \begin{aligned}
            \frac{\partial}{\partial x_i} f(\bb{x})
            &= \left.\frac{\partial}{\partial x_i} f(\bb{x})\right|_{\bb{x}_{\mathcal{J}} = \bb{0}} + \sum_{j\in \mathcal{J}} \frac{\lambda_j}{m} \left.\frac{\partial^2}{\partial x_i \partial x_j} f(\bb{x})\right|_{\bb{x}_{\mathcal{J}} = \bb{0}} (1 + \oo_{\bb{\lambda}_{\mathcal{J}}}(1)), \\
            \frac{\partial^2}{\partial x_i \partial x_j} f(\bb{x})
            &= \left.\frac{\partial^2}{\partial x_i \partial x_j} f(\bb{x})\right|_{\bb{x}_{\mathcal{J}} = \bb{0}} (1 + \oo_{\bb{\lambda}_{\mathcal{J}}}(1)),
        \end{aligned}
    \end{align}
    Then, from \eqref{eq:thm:Leblanc.2012.boundary.Theorem.1.eq.Delta.1}, \eqref{eq:thm:Leblanc.2012.boundary.Theorem.1.eq.Delta.2} \eqref{eq:thm:Leblanc.2012.boundary.Theorem.1.eq.Taylor} and Lemma~\ref{lem:Leblanc.2012.boundary.Lemma.2}, we can easily deduce the conclusion.

\subsection{Proof of Lemma~\ref{lem:Leblanc.2012.boundary.Lemma.4}}

    By the independence of the observations $\bb{X}_1,\bb{X}_2,\dots,\bb{X}_n$, we have
    \begin{equation}\label{eq:thm:Leblanc.2012.boundary.Theorem.2.eq.1}
        \VV(\widetilde{f}_{n,m}(\bb{x})) = n^{-1} \left\{m^{2d} \hspace{-3mm}\sum_{\bb{k}\in \N_0^d \cap (m-1)\mathcal{S}_d} \int_{(\frac{\bb{k}}{m}, \frac{\bb{k} + 1}{m}]} \hspace{-0.5mm} f(\bb{y}) \rd \bb{y} \, P_{\bb{k},m-1}^2(\bb{x}) - \EE[\widetilde{f}_{n,m}(\bb{x})]^2\right\}.
    \end{equation}
    From Lemma~\ref{lem:Leblanc.2012.boundary.Lemma.2}, we already know that $\EE[\widetilde{f}_{n,m}(\bb{x})] = f(\bb{x}) + \OO(m^{-1})$, uniformly for $\bb{x}\in \mathcal{S}_d$.
    We can also expand the integral using a Taylor expansion:
    \begin{equation}\label{eq:thm:Leblanc.2012.boundary.Theorem.2.eq.2}
        \begin{aligned}
            &m^d \hspace{-3mm}\sum_{\bb{k}\in \N_0^d \cap (m-1)\mathcal{S}_d} \int_{(\frac{\bb{k}}{m}, \frac{\bb{k} + 1}{m}]} \hspace{-0.5mm} f(\bb{y}) \rd \bb{y} \, P_{\bb{k},m-1}^2(\bb{x}) \\
            &\quad= f(\bb{x}) \hspace{-3mm} \sum_{\bb{k}\in \N_0^d \cap (m-1)\mathcal{S}_d} \hspace{-3mm} P_{\bb{k},m-1}^2(\bb{x}) + \frac{1}{m} \sum_{i\in [d]} \OO\left(\sum_{\bb{k}\in \N_0^d \cap (m-1)\mathcal{S}_d} \hspace{-3mm} |k_i - m x_i| P_{\bb{k},m-1}^2(\bb{x})\right) + \OO(m^{-1}).
        \end{aligned}
    \end{equation}
    The Cauchy-Schwarz inequality yields
    \begin{equation}\label{eq:thm:Leblanc.2012.boundary.Theorem.2.eq.3}
        \begin{aligned}
            &\sum_{\bb{k}\in \N_0^d \cap (m-1)\mathcal{S}_d} \hspace{-3mm} |k_i - m x_i| P_{\bb{k},m-1}^2(\bb{x}) \\
            &\qquad\leq \sqrt{\sum_{\bb{k}\in \N_0^d \cap (m-1)\mathcal{S}_d} \hspace{-3mm} |k_i - m x_i|^2 P_{\bb{k},m-1}(\bb{x})} \sqrt{\sum_{\bb{k}\in \N_0^d \cap (m-1)\mathcal{S}_d} \hspace{-3mm} P_{\bb{k},m-1}^3(\bb{x})}.
        \end{aligned}
    \end{equation}
    By putting \eqref{eq:thm:Leblanc.2012.boundary.Theorem.2.eq.1}, \eqref{eq:thm:Leblanc.2012.boundary.Theorem.2.eq.2} and \eqref{eq:thm:Leblanc.2012.boundary.Theorem.2.eq.3} together, we get the conclusion.

\subsection{Proof of Theorem~\ref{thm:Leblanc.2012.boundary.Theorem.2}}

    By Lemma~\ref{lem:Leblanc.2012.boundary.Lemma.4}, Lemma~\ref{lem:Leblanc.2012.boundary.Lemma.3}, and \eqref{eq:thm:central.moments.eq.2} in Lemma~ \ref{lem:Leblanc.2012.boundary.Lemma.1}, we have
    \begin{equation}
        \begin{aligned}
            \VV(\widetilde{f}_{n,m}(\bb{x}))
            &= \frac{m^{(d + |\mathcal{J}|)/2}}{n} f(\bb{x}) \left\{\psi_{[d]\backslash \mathcal{J}}(\bb{x}) \prod_{i\in \mathcal{J}} e^{-2 \lambda_i} I_0(2 \lambda_i) + \OO_{\bb{\lambda}_{\mathcal{J}}}(m^{-1}) + \oo_{\bb{x}_{[d]\backslash \mathcal{J}}}(1) \, \ind_{\{\mathcal{J} \neq [d]\}}\right\} \\
            &\quad+ \frac{m^{d-1}}{n} \sum_{i\in [d]} \left\{\OO_{\bb{\lambda}_{\mathcal{J}}}(1 + \ind_{\{i\not\in \mathcal{J}\}} m^{1/2}) \, \OO_{\bb{\lambda}_{\mathcal{J}}\hspace{-0.2mm},\hspace{0.3mm} \bb{x}_{[d]\backslash \mathcal{J}}}\big(m^{-(d - |\mathcal{J}|)/2}\big) + \OO(1)\right\}.
        \end{aligned}
    \end{equation}
    Using the Taylor expansion $f(\bb{x}) = \left.f(\bb{x})\right|_{\bb{x}_{\mathcal{J}} = \bb{0}} + \OO_{\bb{\lambda}_{\mathcal{J}}}(m^{-1})$, we get
    \begin{equation}
        \begin{aligned}
            \VV(\widetilde{f}_{n,m}(\bb{x}))
            &= \frac{m^{(d + |\mathcal{J}|)/2}}{n} \left(\left.f(\bb{x})\right|_{\bb{x}_{\mathcal{J}} = \bb{0}} + \OO_{\bb{\lambda}_{\mathcal{J}}}(m^{-1})\right) \\
            &\qquad\cdot \left\{\psi_{[d]\backslash \mathcal{J}}(\bb{x}) \prod_{i\in \mathcal{J}} e^{-2 \lambda_i} I_0(2 \lambda_i) + \OO_{\bb{\lambda}_{\mathcal{J}}}(m^{-1}) + \oo_{\bb{x}_{[d]\backslash \mathcal{J}}}(1) \, \ind_{\{\mathcal{J} \neq [d]\}}\right\} \\
            &\quad+ \frac{m^{(d + |\mathcal{J}|)/2}}{n} \, \OO_{\bb{\lambda}_{\mathcal{J}}\hspace{-0.2mm},\hspace{0.3mm} \bb{x}_{[d]\backslash \mathcal{J}}}\big(m^{-1} + \ind_{\{\mathcal{J} \neq [d]\}} m^{-1/2}\big).
        \end{aligned}
    \end{equation}
    The conclusion follows.

\section{Proof of the results for the c.d.f.\ estimator \texorpdfstring{$F_{n,m}^{\star}$}{Fn,m star}}\label{sec:proofs.results.cdf.estimator}

\subsection{Proof of Lemma~\ref{lem:Leblanc.2012.boundary.Lemma.6}}

    Take $\delta_n \searrow 0$ slow enough as $n\to \infty$ that the contribution coming from points $\bb{k} / m$ outside the bulk of the multinomial distribution (i.e., $\|\bb{k} / m - \bb{x}\|_1 > \delta_n$) is negligible, exactly as we did in \eqref{eq:contribution.outside.bulk}.
    Then, for any $\bb{k}$ such that $\|\bb{k} / m - \bb{x}\|_1 \leq \delta_n$, we can use a Taylor expansion and our assumption that $F$ is three-times continuously differentiable to obtain
    \begin{equation}\label{eq:lem:Leblanc.2012.boundary.Lemma.6.eq.Taylor}
        \begin{aligned}
            F(\bb{k} / m)
            &= F(\bb{x}) + \sum_{i\in [d]} (k_i / m - x_i) \, \frac{\partial}{\partial x_i} F(\bb{x}) + \frac{1}{2} \sum_{i,j\in [d]} (k_i / m - x_i) (k_j / m - x_j) \, \frac{\partial^2}{\partial x_i \partial x_j} F(\bb{x}) \\
            &\quad+ \frac{1}{6} \sum_{i,j,\ell\in [d]} (k_i / m - x_i) (k_j / m - x_j) (k_{\ell} / m - x_{\ell}) \, \frac{\partial^3}{\partial x_i \partial x_j \partial x_{\ell}} F(\bb{x}) + \oo\big(\|\bb{k} / m - \bb{x}\|_1^3\big).
        \end{aligned}
    \end{equation}
    If we multiply the last expression by $P_{\bb{k},m}(\bb{x})$, sum over $\bb{k}\in \N_0^d \cap m \mathcal{S}_d$, and then take the expectation on both sides, we get
    \begin{equation}
        \begin{aligned}
            \EE[F_{n,m}^{\star}(\bb{x})]
            &= F(\bb{x}) + \sum_{i\in [d]} \EE\big[\xi_i / m - x_i\big] \, \frac{\partial}{\partial x_i} F(\bb{x}) + \frac{1}{2} \sum_{i,j\in [d]} \EE\big[(\xi_i / m - x_i) (\xi_j / m - x_j)\big] \, \frac{\partial^2}{\partial x_i \partial x_j} F(\bb{x}) \\
            &\quad+ \frac{1}{6} \sum_{i,j,\ell\in [d]} \EE\big[(\xi_i / m - x_i) (\xi_j / m - x_j) (\xi_{\ell} / m - x_{\ell})\big] \, \frac{\partial^3}{\partial x_i \partial x_j \partial x_{\ell}} F(\bb{x}) \\
            &\quad+ \sum_{i,j,\ell\in [d]} \oo\left(\EE\big[|\xi_i / m - x_i| |\xi_j / m - x_j| |\xi_{\ell} / m - x_{\ell}|\big]\right),
        \end{aligned}
    \end{equation}
    where $\bb{\xi} = (\xi_1,\xi_2,\dots,\xi_d)\sim \mathrm{Multinomial}\hspace{0.2mm}(m,\bb{x})$.
    From the multinomial joint central moments in Lemma~\ref{lem:Leblanc.2012.boundary.Lemma.1}, we get
    \begin{align}
        \EE[F_{n,m}^{\star}(\bb{x})]
        &= F(\bb{x}) + \frac{1}{2m} \sum_{i,j\in [d]} (x_i \ind_{\{i = j\}} - x_i x_j) \, \frac{\partial^2}{\partial x_i \partial x_j} F(\bb{x}) \notag \\
        &\quad+ \frac{1}{6m^2} \sum_{i,j,\ell\in [d]} \left(\hspace{-1mm}
            \begin{array}{l}
                2 x_i x_j x_{\ell} - \ind_{\{i = j\}} x_i x_{\ell} - \ind_{\{j = \ell\}} x_i x_j \\
                - \ind_{\{i = \ell\}} x_j x_{\ell} + \ind_{\{i = j = \ell\}} x_i
            \end{array}
            \hspace{-1mm}\right) \frac{\partial^3}{\partial x_i \partial x_j \partial x_{\ell}} F(\bb{x})
            \notag \\
        &\quad+ \frac{1}{m^3} \sum_{i,j,\ell\in [d]} \oo\left(\EE\big[|\xi_i - m \, x_i| |\xi_j - m \, x_j| |\xi_{\ell} - m \, x_{\ell}|\big]\right).
    \end{align}
    We apply Holder's inequality on the error term to get the conclusion.

\subsection{Proof of Theorem~\ref{thm:Leblanc.2012.boundary.Theorem.3}}

    Take $\bb{x}$ as in the statement of the theorem.
    For all $i,j,\ell\in [d]$, note that
    \begin{align}\label{eq:thm:Leblanc.2012.boundary.Theorem.3.eq.Taylor}
        \frac{\partial^2}{\partial x_i \partial x_j} F(\bb{x})
        &= \left. \frac{\partial^2}{\partial x_i \partial x_j} F(\bb{x})\right|_{\bb{x}_{\mathcal{J}} = \bb{0}} + \sum_{\ell\in \mathcal{J}} \frac{\lambda_{\ell}}{m} \left. \frac{\partial^3}{\partial x_i \partial x_j \partial x_{\ell}} F(\bb{x})\right|_{\bb{x}_{\mathcal{J}} = \bb{0}} (1 + \oo_{\bb{\lambda}_{\mathcal{J}}}(1)), \\
        \frac{\partial^3}{\partial x_i \partial x_j \partial x_{\ell}} F(\bb{x})
        &= \left. \frac{\partial^3}{\partial x_i \partial x_j \partial x_{\ell}} F(\bb{x})\right|_{\bb{x}_{\mathcal{J}} = \bb{0}} + \oo_{\bb{\lambda}_{\mathcal{J}}}(1).
    \end{align}
    By Lemma~\ref{lem:Leblanc.2012.boundary.Lemma.6} and the second and fourth moments in Lemma~\ref{lem:Leblanc.2012.boundary.Lemma.1}, we deduce
    \begin{align}
        &\BB(F_{n,m}^{\star}(\bb{x})) \\
        &\quad= \frac{1}{m} \sum_{i,j\in [d]\backslash \mathcal{J}} \tfrac{1}{2} (x_i \ind_{\{i = j\}} - x_i x_j) \left. \frac{\partial^2}{\partial x_i \partial x_j} F(\bb{x})\right|_{\bb{x}_{\mathcal{J}} = \bb{0}} \notag \\
        &\qquad+ \frac{1}{m^2} \left\{\hspace{-1mm}
            \begin{array}{l}
                \frac{1}{6} \sum_{i,j,\ell\in [d]\backslash \mathcal{J}}
                \left(\hspace{-1mm}
                    \begin{array}{l}
                        2 x_i x_j x_{\ell} - \ind_{\{i = j\}} x_i x_{\ell} - \ind_{\{j = \ell\}} x_i x_j \\
                        - \ind_{\{i = \ell\}} x_j x_{\ell} + \ind_{\{i = j = \ell\}} x_i
                    \end{array}
                    \hspace{-1mm}\right) \left. \frac{\partial^3}{\partial x_i \partial x_j \partial x_{\ell}} F(\bb{x})\right|_{\bb{x}_{\mathcal{J}} = \bb{0}} \\
                + \sum_{i,j\in [d]\backslash \mathcal{J}} \sum_{\ell\in \mathcal{J}} \tfrac{\lambda_{\ell}}{2} (x_i \ind_{\{i = j\}} - x_i x_j) \left. \frac{\partial^3}{\partial x_i \partial x_j \partial x_{\ell}} F(\bb{x})\right|_{\bb{x}_{\mathcal{J}} = \bb{0}} \hspace{-2mm} \\
                + \frac{1}{2} \sum_{i\in \mathcal{J}} \sum_{j\in [d]\backslash \mathcal{J}} \big(\lambda_i \ind_{\{i = j\}} - 2 \lambda_i x_j\big) \left. \frac{\partial^2}{\partial x_i \partial x_j} F(\bb{x})\right|_{\bb{x}_{\mathcal{J}} = \bb{0}}
            \end{array}
            \hspace{-1mm}\right\} \notag \\
        &\qquad+ \OO_{\bb{\lambda}_{\mathcal{J}}}(m^{-3}) + \oo_{\bb{\lambda}_{\mathcal{J}}}(m^{-3/2}) \ind_{\{\mathcal{J}\neq [d]\}},
    \end{align}
    This ends the proof.

\subsection{Proof of Lemma~\ref{lem:Leblanc.2012.boundary.Lemma.8}}

    To estimate the variance of $F_{n,m}^{\star}(\bb{x})$, note that
    \begin{equation}\label{eq:hat.F.as.mean.Z.i.m}
        F_{n,m}^{\star}(\bb{x}) - \EE[F_{n,m}^{\star}(\bb{x})] = \frac{1}{n} \sum_{i=1}^n \sum_{\bb{k}\in \N_0^d \cap m\mathcal{S}_d} \big(\ind_{(-\bb{\infty},\frac{\bb{k}}{m}]}(\bb{X}_i) - F(\bb{k} / m)\big) P_{\bb{k},m}(\bb{x}).
    \end{equation}
    By the independence of the observations $\bb{X}_1,\bb{X}_2,\dots,\bb{X}_n$, we get
    \begin{equation}
        \VV(F_{n,m}^{\star}(\bb{x})) = n^{-1} \hspace{0.2mm} \left\{\sum_{\bb{k},\bb{\ell}\in \N_0^d \cap m\mathcal{S}_d} \hspace{-3mm} F((\bb{k} \wedge \bb{\ell}) / m) P_{\bb{k},m}(\bb{x}) P_{\bb{\ell},m}(\bb{x}) - \EE[F_{n,m}^{\star}(\bb{x})]^2\right\}.
    \end{equation}
    Using the expansion in \eqref{eq:lem:Leblanc.2012.boundary.Lemma.6.eq.Taylor} and Lemma~\ref{lem:Leblanc.2012.boundary.Lemma.6}, the above is
    \begin{equation}\label{eq:thm:bias.var.Taylor.expansion}
        \begin{aligned}
            = n^{-1} \left\{\hspace{-1mm}
                \begin{array}{l}
                    F(\bb{x}) - F^2(\bb{x}) + \sum_{i,j\in [d]} \OO\big(m^{-1} (x_i \ind_{\{i = j\}} - x_i x_j)\big) + \OO(m^{-2}) \\
                    + \sum_{i\in [d]} \frac{\partial}{\partial x_i} F(\bb{x}) \sum_{\bb{k},\bb{\ell}\in \N_0^d \cap m\mathcal{S}_d} ((k_i \wedge \ell_i) / m - x_i) P_{\bb{k},m}(\bb{x}) P_{\bb{\ell},m}(\bb{x}) \\
                    + \sum_{i\in [d]} \OO\left(m^{-1} \sum_{\bb{k}\in \N_0^d \cap m\mathcal{S}_d} |k_i - m \, x_i|^2 P_{\bb{k},m}(\bb{x})\right)
                \end{array}
                \hspace{-1mm}\right\}.
        \end{aligned}
    \end{equation}
    By the second moment expression in Lemma~\ref{lem:Leblanc.2012.boundary.Lemma.1}, we get the conclusion.

\subsection{Proof of Theorem~\ref{thm:Leblanc.2012.boundary.Theorem.4}}

    By Lemma~\ref{lem:Leblanc.2012.boundary.Lemma.8}, Lemma~\ref{lem:Leblanc.2012.boundary.Lemma.7} and \eqref{eq:thm:central.moments.eq.2} in Lemma~ \ref{lem:Leblanc.2012.boundary.Lemma.1}, we have
    \begin{equation}
        \begin{aligned}
            \VV(F_{n,m}^{\star}(\bb{x}))
            &= n^{-1} m^{-1} \sum_{i\in [d]} \frac{\partial}{\partial x_i} F(\bb{x}) \, \left\{\hspace{-1mm}
                \begin{array}{l}
                    - \lambda_i e^{-2 \lambda_i} (I_0(2 \lambda_i) + I_1(2\lambda_i)) \ind_{\{i\in \mathcal{J}\}} \\
                    - m^{1/2} \sqrt{\pi^{-1} x_i (1 - x_i)} \ind_{\{i\in [d]\backslash \mathcal{J}\}}
                \end{array}
                \hspace{-1mm}\right\} \\
            &\quad+ n^{-1} F(\bb{x}) (1 - F(\bb{x})) + \OO_{\bb{\lambda}_{\mathcal{J}}}\big(n^{-1} m^{-2}\big) \\
            &\quad+ \oo_{\bb{\lambda}_{\mathcal{J}}\hspace{-0.2mm},\hspace{0.3mm} \bb{x}_{[d]\backslash \mathcal{J}}}\big(n^{-1} m^{-1/2}\big) \, \ind_{\{\mathcal{J} \neq [d]\}}.
        \end{aligned}
    \end{equation}
    Now, using the fact that
    \begin{align}\label{eq:thm:Leblanc.2012.boundary.Theorem.4.eq.Taylor}
        F(\bb{x})
        &= \underbrace{\big.F(\bb{x})\big|_{\bb{x}_{\mathcal{J}} = \bb{0}}}_{=~0~\text{when } \mathcal{J}\neq \emptyset} + \sum_{i\in \mathcal{J}} \frac{\lambda_i}{m} \big.\frac{\partial}{\partial x_i} F(\bb{x})\big|_{\bb{x}_{\mathcal{J}} = \bb{0}} + \OO_{\bb{\lambda}_{\mathcal{J}}}(m^{-2}), \\
        \frac{\partial}{\partial x_i} F(\bb{x})
        &= \big.\frac{\partial}{\partial x_i} F(\bb{x})\big|_{\bb{x}_{\mathcal{J}} = \bb{0}} + \sum_{j\in \mathcal{J}} \frac{\lambda_j}{m} \big.\frac{\partial^2}{\partial x_i \partial x_j} F(\bb{x})\big|_{\bb{x}_{\mathcal{J}} = \bb{0}} + \OO_{\bb{\lambda}_{\mathcal{J}}}(m^{-2}),
    \end{align}
    we get
    \begin{equation}
        \begin{aligned}
            \VV(F_{n,m}^{\star}(\bb{x}))
            &= n^{-1} m^{-1} \sum_{i\in [d]} \big.\frac{\partial}{\partial x_i} F(\bb{x})\big|_{\bb{x}_{\mathcal{J}} = \bb{0}} \, \left\{\hspace{-1mm}
                \begin{array}{l}
                    \lambda_i \, \big(1 - e^{-2 \lambda_i} (I_0(2 \lambda_i) + I_1(2\lambda_i))\big) \ind_{\{i\in \mathcal{J}\}} \\
                    - m^{1/2} \sqrt{\pi^{-1} x_i (1 - x_i)} \ind_{\{i\in [d]\backslash \mathcal{J}\}}
                \end{array}
                \hspace{-1mm}\right\} \\
            &\quad+ n^{-1} F(\bb{x}) (1 - F(\bb{x})) \, \ind_{\{\mathcal{J} = \emptyset\}} + \OO_{\bb{\lambda}_{\mathcal{J}}}\big(n^{-1} m^{-2}\big) + \oo_{\bb{\lambda}_{\mathcal{J}}\hspace{-0.2mm},\hspace{0.3mm} \bb{x}_{[d]\backslash \mathcal{J}}}\big(n^{-1} m^{-1/2}\big) \, \ind_{\{\mathcal{J} \neq [d]\}}.
        \end{aligned}
    \end{equation}
    This ends the proof.

\appendix

\section{Technical lemmas}\label{sec:tech.lemmas}

The first lemma is a generalization of Lemma~3 in \cite{MR4287788}.
It is used in the proof of Theorem~\ref{thm:Leblanc.2012.boundary.Theorem.2}.

\begin{lemma}\label{lem:Leblanc.2012.boundary.Lemma.3}
    For any $\bb{x}\in \mathcal{S}_d$ such that $x_i = \lambda_i / m$ for all $i\in \mathcal{J}\subseteq [d]$ ($\lambda_i\geq 0$ is fixed $\forall i\in \mathcal{J}$) and $x_i\in (0,1)$ independent of $m$ for all $i\in [d] \backslash \mathcal{J}$, we have, as $m\to \infty$,
    \begin{align}
        m^{(d - |\mathcal{J}|)/2} \hspace{-3mm} \sum_{\bb{k}\in \N_0^d \cap (m - 1)\mathcal{S}_d} \hspace{-3mm} P_{\bb{k},m-1}^2(\bb{x})
        &= \psi_{[d]\backslash \mathcal{J}}(\bb{x}) \prod_{i\in \mathcal{J}} e^{-2 \lambda_i} I_0(2 \lambda_i) \\[-2mm]
        &\hspace{25mm}+ \OO_{\bb{\lambda}_{\mathcal{J}}}(m^{-1}) + \oo_{\bb{\lambda}_{\mathcal{J}}\hspace{-0.2mm},\hspace{0.3mm} \bb{x}_{[d]\backslash \mathcal{J}}}(1) \, \ind_{\{\mathcal{J} \neq [d]\}}, \label{eq:lem:Leblanc.2012.boundary.Lemma.3} \\[2mm]
        \sum_{\bb{k}\in \N_0^d \cap (m - 1)\mathcal{S}_d} P_{\bb{k},m-1}^3(\bb{x})
        &= \OO_{\bb{\lambda}_{\mathcal{J}}\hspace{-0.2mm},\hspace{0.3mm} \bb{x}_{[d]\backslash \mathcal{J}}}\big(m^{-(d - |\mathcal{J}|)}\big), \label{eq:lem:technical.sums.power.3.eq.2}
    \end{align}
    where $I_0$ is defined in \eqref{eq:def:I.0.I.1} and $\psi_{[d]\backslash \mathcal{J}}$ is defined in \eqref{eq:vector.subset}.
\end{lemma}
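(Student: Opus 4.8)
The plan is to reduce the multivariate sum to a product over coordinates, treating the ``near-boundary'' coordinates ($i\in\mathcal J$) and the ``interior'' coordinates ($i\in[d]\backslash\mathcal J$) differently, then invoke known one-dimensional asymptotics (local central limit theorem / Bessel-function asymptotics) in each factor. Concretely, the multinomial weight $P_{\bb k,m-1}(\bb x)$ does not factor as a product, but its square behaves, after the standard local expansion around the mode, like a product of one-dimensional contributions because the covariance structure of the multinomial is a rank-one perturbation of a diagonal matrix and that perturbation is negligible at the level of precision we need. So first I would write $\sum_{\bb k} P_{\bb k,m-1}^2(\bb x)$ as a Riemann-type sum and split the index set according to whether each coordinate $k_i$ is of order $m$ (interior directions) or of order $O(1)$ (the directions $i\in\mathcal J$, where $x_i=\lambda_i/m$). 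In the interior directions the summand, suitably normalized by $m^{1/2}$ per coordinate, converges to the Gaussian-type integral producing the factor $[(4\pi)(1-\sum_{i\in[d]\backslash\mathcal J}x_i)\prod_{i\in[d]\backslash\mathcal J}x_i]^{-1/2}=\psi_{[d]\backslash\mathcal J}(\bb x)$; in the boundary directions, since $k_i$ stays bounded, the Poisson approximation to the binomial with mean $(m-1)x_i=(m-1)\lambda_i/m\to\lambda_i$ gives $\sum_{k_i\ge0}(e^{-\lambda_i}\lambda_i^{k_i}/k_i!)^2=e^{-2\lambda_i}\sum_{k_i}\lambda_i^{2k_i}/(k_i!)^2=e^{-2\lambda_i}I_0(2\lambda_i)$ by the very definition of $I_0$ in \eqref{eq:def:I.0.I.1}.

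The key steps, in order, would be: (1) restrict the sum to the bulk $\|\bb k/m-\bb x\|_1\le\delta_m$ using the same binomial concentration bound as in \eqref{eq:contribution.outside.bulk}, discarding an $\oo(m^{-2})$-type remainder which is swallowed by the stated error; (2) on the bulk, apply Stirling's formula to each factorial in $P_{\bb k,m-1}(\bb x)$ and Taylor-expand the resulting exponent to obtain a Gaussian approximation in the interior coordinates and a Poisson approximation in the $\mathcal J$ coordinates, controlling the error by tracking the next-order term, which contributes the $\OO_{\bb\lambda_{\mathcal J}}(m^{-1})$; (3) recognize the product structure: after normalizing by $m^{(d-|\mathcal J|)/2}$, the interior part becomes a Riemann sum converging to $\psi_{[d]\backslash\mathcal J}(\bb x)$ (the $\ind_{\{\mathcal J\ne[d]\}}\oo(1)$ term being the Riemann-sum discretization error, which genuinely vanishes only when there is at least one interior coordinate — hence the indicator), while the boundary part is an absolutely convergent series equal to $\prod_{i\in\mathcal J}e^{-2\lambda_i}I_0(2\lambda_i)$ up to $O(1/m)$; (4) for the cubic sum \eqref{eq:lem:technical.sums.power.3.eq.2}, repeat the same bulk reduction and note that $P_{\bb k,m-1}^3$ carries three copies of the $m^{-1/2}$ Gaussian normalization per interior coordinate, giving $\sum P^3=\OO(m^{-(d-|\mathcal J|)})$ directly from the sup-norm bound $P_{\bb k,m-1}(\bb x)=\OO(m^{-(d-|\mathcal J|)/2})$ on the bulk times $\sum P^2=\OO(1)\cdot m^{-(d-|\mathcal J|)/2}$, so no sharp constant is needed there.

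The main obstacle I anticipate is handling the mixed coordinates cleanly: the multinomial is genuinely a joint distribution, so the ``factorization'' into a Gaussian block over $[d]\backslash\mathcal J$ and a Poisson block over $\mathcal J$ is only approximate, and one must verify that the cross-terms in the quadratic form appearing in the Stirling expansion — in particular the contribution of the ``leftover mass'' coordinate $1-\|\bb x\|_1=1-\sum_{i\in[d]\backslash\mathcal J}x_i+\OO(m^{-1})$ — do not spoil the leading constant. Because $x_i=\lambda_i/m$ for $i\in\mathcal J$, these coordinates contribute only $\OO(m^{-1})$ to the total interior mass, so the interior Gaussian block ``sees'' the reduced simplex $\{1-\sum_{i\in[d]\backslash\mathcal J}x_i\}$ exactly in the limit, which is precisely why $\psi$ is indexed by $[d]\backslash\mathcal J$ rather than $[d]$; making this interchange of limits rigorous (uniformly in the bounded $\lambda_i$'s) is the delicate point. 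Everything else is a careful but routine bookkeeping of Stirling error terms, and I would lean on the analogous one-dimensional computation in \citet{MR2925964} and on Lemma~3 of \citet{MR4287788} (of which this is the stated generalization) to organize the estimates.
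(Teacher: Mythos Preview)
Your proposal is correct and identifies the right ingredients --- Poisson approximation in the $\mathcal J$-coordinates, Gaussian approximation in the $[d]\backslash\mathcal J$-coordinates, and the recognition of $e^{-2\lambda_i}I_0(2\lambda_i)$ from the squared Poisson weights --- but the route differs from the paper's. Rather than building the mixed Poisson/Gaussian local approximation by hand via Stirling and Taylor expansions, the paper invokes a ready-made local limit theorem for the multinomial distribution due to \citet{MR0478288}, which gives a total-variation bound between $P_{\bb k,m-1}(\bb x)$ and the product $\big(\prod_{i\in\mathcal J}\text{Poisson}\big)\times(m-1)^{-(d-|\mathcal J|)/2}\phi_{[d]\backslash\mathcal J}(\bb x)$, with an explicit error of size $\|\bb x_{\mathcal J}\|_1+\OO(m^{-1/2}\|\bb x_{[d]\backslash\mathcal J}^{-1/2}\|_1)$. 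One then writes $\sum P^2-\sum Q^2=\sum(P+Q)(P-Q)$, bounds $P+Q$ trivially, and applies the total-variation estimate to $\sum|P-Q|$; the evaluation of the remaining sum $\sum Q^2$ is deferred to Equation~(39) of \citet{MR4287788}, and \eqref{eq:lem:technical.sums.power.3.eq.2} is likewise a direct corollary of Equation~(40) there. This black-box approach sidesteps exactly the ``mixed-coordinate'' obstacle you flag, since Arenbaev's theorem already handles the joint structure; your direct approach would work but requires reproducing that theorem's proof.

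One small imprecision worth noting: your remark that the rank-one part of the multinomial covariance is ``negligible at the level of precision we need'' is not quite right --- the off-diagonal terms $-x_ix_j$ are the same order as the diagonal, and it is precisely the determinant identity $\det(\mathrm{diag}(\bb x_A)-\bb x_A\bb x_A^\top)=(1-\sum_{i\in A}x_i)\prod_{i\in A}x_i$ that produces the factor $(1-\sum_{i\in[d]\backslash\mathcal J}x_i)$ inside $\psi_{[d]\backslash\mathcal J}$. You seem to recover this later when discussing the leftover-mass coordinate, but the earlier phrasing suggests you might try to diagonalize too aggressively.
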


The crucial tool for the proof is a local limit theorem for the multinomial distribution in \cite{MR0478288} that combines the multivariate Poisson approximation and the multivariate normal approximation, depending on which components of $\bb{x}\in \mathcal{S}_d$ are close to the boundary.

\begin{proof}[\bf Proof of Lemma~\ref{lem:Leblanc.2012.boundary.Lemma.3}]
    Denote $\bb{x}_A \leqdef (x_i)_{i\in A}$ for any subset of indices $A\subseteq [d]$, and let
    \begin{equation}
        \phi_{\emptyset}(\bb{x}) \leqdef 1, \qquad \phi_A(\bb{x}) \leqdef \frac{\exp\big(-\frac{1}{2} \bb{\delta}_{\bb{x}_A}^{\top} (\mathrm{diag}(\bb{x}_A) - \bb{x}_A \bb{x}_A^{\top})^{-1} \bb{\delta}_{\bb{x}_A}\big)}{\sqrt{(2\pi)^{|A|} \, (1 - \sum_{i\in A} x_i) \prod_{i\in A} x_i}},
    \end{equation}
    with
    \begin{equation}
        \bb{\delta}_{\bb{x}_A} \leqdef \left(\frac{k_i - (m - 1) x_i}{\sqrt{m - 1}}\right)_{i\in A}.
    \end{equation}
    By successively applying the triangle inequality, the identity $a^2 - b^2 = (a + b)(a - b)$ and the total variation bound from Theorem~3 in \cite{MR0478288}, we have, as $m\to \infty$,
    \begin{align}
        &\left|\sum_{\bb{k}\in \N_0^d \cap (m - 1)\mathcal{S}_d} \hspace{-3mm} P_{\bb{k},m-1}^2(\bb{x}) - \hspace{-3mm} \sum_{\bb{k}\in \N_0^d \cap (m - 1)\mathcal{S}_d} \left(\prod_{i\in \mathcal{J}} \frac{((m - 1) x_i)^{k_i}}{k_i!} e^{-(m - 1) x_i}\right)^2 \left(\frac{\phi_{[d]\backslash \mathcal{J}}(\bb{x})}{(m - 1)^{(d - |\mathcal{J}|)/2}}\right)^2\right| \notag \\
        &\quad\leq 2.1 \sum_{\bb{k}\in \N_0^d \cap (m - 1)\mathcal{S}_d} \left|P_{\bb{k},m-1}(\bb{x}) - \left(\prod_{i\in \mathcal{J}} \frac{((m - 1) x_i)^{k_i}}{k_i!} e^{-(m - 1) x_i}\right) \frac{\phi_{[d]\backslash \mathcal{J}}(\bb{x})}{(m - 1)^{(d - |\mathcal{J}|)/2}}\right| \notag \\
        &\quad= 2.1 \, \|\bb{x}_{\mathcal{J}}\|_1 \left\{\sqrt{\frac{2}{\pi e}} + \OO\big(1 \wedge (m \|\bb{x}_{\mathcal{J}}\|_1)^{-1/2}\big)\right\} + \OO\left(\frac{\|\bb{x}_{[d]\backslash \mathcal{J}}^{-1/2}\|_1}{\sqrt{m}} + \frac{\|\bb{x}_{[d]\backslash \mathcal{J}}^{-1}\|_{\infty}}{m}\right),
    \end{align}
    with the conventions $\bb{x}_A^{\scriptscriptstyle -1/2} \leqdef (x_i^{\scriptscriptstyle -1/2})_{i\in A}$, $\bb{x}_A^{\scriptscriptstyle -1} \leqdef (x_i^{\scriptscriptstyle -1})_{i\in A}$ and $\|\bb{x}_{\emptyset}^{-1/2}\|_1 = \|\bb{x}_{\emptyset}^{-1}\|_{\infty} = 0$.
    By the assumption on $\bb{x}$ in the statement of the lemma, we get
    \begin{equation}
        \begin{aligned}
            m^{(d - |\mathcal{J}|)/2} \hspace{-3mm} \sum_{\bb{k}\in \N_0^d \cap (m - 1)\mathcal{S}_d} \hspace{-3mm} P_{\bb{k},m-1}^2(\bb{x})
            &= \sum_{\bb{k}\in \N_0^d \cap (m - 1)\mathcal{S}_d} \left(\prod_{i\in \mathcal{J}} \frac{((m - 1) x_i)^{k_i}}{k_i!} e^{-(m - 1) x_i}\right)^2 \frac{\phi_{[d]\backslash \mathcal{J}}^2(\bb{x})}{(m - 1)^{(d - |\mathcal{J}|)/2}} \\
            &\quad+ \OO_{\bb{\lambda}_{\mathcal{J}}\hspace{-0.2mm},\hspace{0.3mm} \bb{x}_{[d]\backslash \mathcal{J}}}\big(m^{-1} + m^{-1/2} \ind_{\{\mathcal{J} \neq [d]\}}\big).
        \end{aligned}
    \end{equation}
    By Equation~(39) in \cite{MR4287788}, the above is
    \begin{align}
        &= \prod_{i\in \mathcal{J}} e^{-2 \lambda_i} (1 + \OO(m^{-1})) \, \left(\sum_{k=0}^{\infty} \frac{\lambda_i^{2 k}}{(k!)^2} + \OO_{\bb{\lambda}_{\mathcal{J}}}(e^{-\frac{m}{100 d^2}})\right) \big(\psi_{[d]\backslash \mathcal{J}}(\bb{x}) + \oo_{\bb{x}_{[d]\backslash \mathcal{J}}}(1)\big) \\
        &\qquad+ \OO_{\bb{\lambda}_{\mathcal{J}}\hspace{-0.2mm},\hspace{0.3mm} \bb{x}_{[d]\backslash \mathcal{J}}}\big(m^{-1} + m^{-1/2} \ind_{\{\mathcal{J} \neq [d]\}}\big) \notag \\[1mm]
        &= \psi_{[d]\backslash \mathcal{J}}(\bb{x}) \prod_{i\in \mathcal{J}} e^{-2 \lambda_i} I_0(2 \lambda_i) + \OO_{\bb{\lambda}_{\mathcal{J}}}(m^{-1}) + \oo_{\bb{\lambda}_{\mathcal{J}}\hspace{-0.2mm},\hspace{0.3mm} \bb{x}_{[d]\backslash \mathcal{J}}}(1) \, \ind_{\{\mathcal{J} \neq [d]\}},
    \end{align}
    which proves \eqref{eq:lem:Leblanc.2012.boundary.Lemma.3}.
    Equation~\eqref{eq:lem:technical.sums.power.3.eq.2} is direct corollary to Equation~(40) in \cite{MR4287788}.
    This ends the proof.
\end{proof}

The third lemma generalizes Lemma~4 in \cite{MR4287788}.
It is used in the proof of Theorem~\ref{thm:Leblanc.2012.boundary.Theorem.4}.

\begin{lemma}\label{lem:Leblanc.2012.boundary.Lemma.7}
    For any $\bb{x}\in \mathcal{S}_d$ such that $x_i = \lambda_i / m$ for all $i\in \mathcal{J}\subseteq [d]$ ($\lambda_i > 0$ is fixed $\forall i\in \mathcal{J}$) and $x_i\in (0,1)$ independent of $m$ for all $i\in [d] \backslash \mathcal{J}$, we have, as $m\to \infty$,
    \begin{equation}\label{eq:lem:Leblanc.2012.boundary.Lemma.7}
        \begin{aligned}
            &\sum_{\bb{k},\bb{\ell}\in \N_0^d \cap m\mathcal{S}_d} \hspace{-2mm} ((k_p \wedge \ell_p) / m - x_p) P_{\bb{k},m}(\bb{x}) P_{\bb{\ell},m}(\bb{x}) \\
            &\qquad=
            \begin{cases}
                \, - m^{-1} \lambda_p e^{-2 \lambda_p} (I_0(2 \lambda_p) + I_1(2\lambda_p)) + \OO_{\lambda_p}(m^{-2}), &\mbox{if } p\in \mathcal{J}, \\
                \, - m^{-1/2} \sqrt{\pi^{-1} x_p (1 - x_p)} + \oo_{x_p}(m^{-1/2}), &\mbox{if } p\in [d]\backslash \mathcal{J},
            \end{cases}
        \end{aligned}
    \end{equation}
    where $I_j, ~j\in \{0,1\},$ is defined in \eqref{eq:def:I.0.I.1}.
\end{lemma}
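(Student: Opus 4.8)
The plan is to reduce the $d$-dimensional double sum to a one-dimensional quantity involving only the $p$-th coordinate of $\bb{x}$, and then to handle the two regimes $p\in[d]\backslash\mathcal{J}$ (bulk) and $p\in\mathcal{J}$ (near a facet) separately, via a normal approximation and a Poisson approximation respectively.

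First I would observe that the summand $((k_p\wedge\ell_p)/m - x_p)$ depends on $\bb{k},\bb{\ell}$ only through $k_p$ and $\ell_p$, and that the $p$-th marginal of the $\mathrm{Multinomial}(m,\bb{x})$ law is $\mathrm{Binomial}(m,x_p)$; summing $P_{\bb{k},m}(\bb{x})$ over the remaining coordinates with $k_p$ held fixed collapses the double sum to $\EE[(B\wedge B')/m - x_p]$, where $B,B'$ are independent $\mathrm{Binomial}(m,x_p)$ variables. Using the elementary identity $a\wedge b=\tfrac12(a+b-|a-b|)$ together with $\EE B=\EE B'=m x_p$, this equals $-\tfrac{1}{2m}\,\EE|B-B'|$, so everything comes down to estimating $\EE|B-B'|$ in each regime.

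For $p\in[d]\backslash\mathcal{J}$, where $x_p\in(0,1)$ is fixed, I would invoke the central limit theorem, $(B-B')/\sqrt{2 m x_p(1-x_p)}\stackrel{\scriptscriptstyle\mathscr{D}}{\longrightarrow}Z\sim N(0,1)$, combined with the uniform integrability supplied by the (constant) second moment, to get $\EE|B-B'|=\sqrt{2 m x_p(1-x_p)}\,\EE|Z|\,(1+\oo_{x_p}(1))=2\sqrt{m\pi^{-1}x_p(1-x_p)}\,(1+\oo_{x_p}(1))$; dividing by $-2m$ yields the second branch of \eqref{eq:lem:Leblanc.2012.boundary.Lemma.7}. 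This step is essentially the one-dimensional computation of Lemma~4 in \cite{MR4287788} and could simply be cited. For $p\in\mathcal{J}$, where $x_p=\lambda_p/m$, I would approximate $B,B'\sim\mathrm{Binomial}(m,\lambda_p/m)$ by independent $N,N'\sim\mathrm{Poisson}(\lambda_p)$. Since $|B-B'|$ is unbounded, a total-variation bound is not enough; instead I would use the subadditivity of the first-order Wasserstein distance $W_1$ over independent sums together with the per-factor estimate $W_1(\mathrm{Bernoulli}(\lambda_p/m),\mathrm{Poisson}(\lambda_p/m))=\OO(\lambda_p^2/m^2)$, which gives $W_1(\mathrm{Binomial}(m,\lambda_p/m),\mathrm{Poisson}(\lambda_p))=\OO_{\lambda_p}(m^{-1})$ and hence, coupling $B$ with $N$ and $B'$ with $N'$ optimally and independently, $\big|\EE|B-B'|-\EE|N-N'|\big|\le\EE|B-N|+\EE|B'-N'|=\OO_{\lambda_p}(m^{-1})$. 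The difference $N-N'$ is Skellam$(\lambda_p,\lambda_p)$-distributed, with $\PP(N-N'=k)=e^{-2\lambda_p}I_{|k|}(2\lambda_p)$ for $k\in\mathbb{Z}$, so
\[
\EE|N-N'|=2 e^{-2\lambda_p}\sum_{k=1}^{\infty}k\,I_k(2\lambda_p)=2 e^{-2\lambda_p}\cdot\lambda_p\big(I_0(2\lambda_p)+I_1(2\lambda_p)\big),
\]
where the last equality follows from the recurrence $2k\,I_k(z)=z\,(I_{k-1}(z)-I_{k+1}(z))$ and a telescoping sum. Dividing by $-2m$ gives the first branch of \eqref{eq:lem:Leblanc.2012.boundary.Lemma.7} with error $\OO_{\lambda_p}(m^{-2})$.

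The main obstacle is the quantitative control in the boundary case $p\in\mathcal{J}$: one needs the explicit $\OO(m^{-1})$ rate for the Poisson approximation of $\EE|B-B'|$, and because the test function $t\mapsto|t|$ is unbounded this must be obtained at the level of the first-order Wasserstein distance (with the $\OO(m^{-2})$ per-Bernoulli bound above) rather than from total variation. The remaining ingredients — the Skellam probability mass function and the telescoping Bessel identity for $\sum_{k\ge1}k I_k(z)$ — are standard, and the bulk case is routine once the reduction to the binomial marginal is in place.
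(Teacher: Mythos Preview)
Your proposal is correct and follows the same strategy as the paper: reduce the $d$-dimensional double sum to a one-dimensional quantity via the binomial marginal of the multinomial, then treat the bulk case $p\in[d]\backslash\mathcal{J}$ and the boundary case $p\in\mathcal{J}$ separately. For the bulk case you cite the same source as the paper (Lemma~4 of \cite{MR4287788}); for the boundary case the paper simply invokes Lemma~3~(B) and Lemma~7~(B) of \cite{MR2925964}, whereas you supply a self-contained argument via the $W_1$ Poisson approximation and the Skellam/Bessel telescoping identity --- this is a clean way to get the explicit $\OO_{\lambda_p}(m^{-2})$ error and avoids the need to consult the one-dimensional reference.
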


\begin{proof}[\bf Proof of Lemma~\ref{lem:Leblanc.2012.boundary.Lemma.7}]
    We know that the marginal distributions of the multinomial are binomial.
    Therefore, when $p\in \mathcal{J}$, the result follows from Lemma~3~(B) and Lemma~7~(B) in \cite{MR2925964}.
    When $p\in [d]\backslash \mathcal{J}$, the proof is given in Lemma~4 of \cite{MR4287788}.
\end{proof}

Next, we write the joint central moments (of order two, three and four) for the multinomial distribution.
These moments were derived, for example, by \cite{Ouimet_Stats_2021}.

\begin{lemma}[Central moments]\label{lem:Leblanc.2012.boundary.Lemma.1}
    Let $\bb{x}\in \mathcal{S}_d$.
    If $\bb{\xi} = (\xi_1,\xi_2,\dots,\xi_d)\sim \mathrm{Multinomial}\hspace{0.2mm}(m,\bb{x})$, then, for all $i,j,\ell,p\in [d]$,
    \begin{align}
        \EE\big[(\xi_i - m \, x_i)(\xi_j - m \, x_j)\big]
        &= m \, (x_i \ind_{\{i = j\}} - x_i x_j), \label{eq:thm:central.moments.eq.2} \\[1.2mm]
        \EE\big[(\xi_i - m \, x_i)(\xi_j - m \, x_j)(\xi_{\ell} - m \, x_{\ell})\big]
        &= m \, \left(\hspace{-1mm}
            \begin{array}{l}
                2 x_i x_j x_{\ell} - \ind_{\{i = j\}} x_i x_{\ell} - \ind_{\{j = \ell\}} x_i x_j \\
                - \ind_{\{i = \ell\}} x_j x_{\ell} + \ind_{\{i = j = \ell\}} x_i
            \end{array}
            \hspace{-1mm}\right), \label{eq:thm:central.moments.eq.3} \\[0.5mm]
        \EE\big[(\xi_i - m \, x_i)(\xi_j - m \, x_j)(\xi_{\ell} - m \, x_{\ell})(\xi_p - m \, x_p)\big]
        &= \OO(m^2), \quad \text{as } m\to \infty. \label{eq:thm:central.moments.eq.4}
    \end{align}
\end{lemma}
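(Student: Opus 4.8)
The plan is to represent $\bb{\xi}$ as a sum of $m$ independent single-trial categorical vectors and then use independence to collapse each joint central moment into a handful of single-trial moments, which are elementary because each single-trial coordinate lies in $\{0,1\}$. Concretely, I would write $\bb{\xi} = \sum_{r=1}^m \bb{Y}_r$, where $\bb{Y}_1,\dots,\bb{Y}_m$ are i.i.d.\ with $\bb{Y}_r$ equal to the $i$-th standard basis vector $\bb{e}_i\in\R^d$ with probability $x_i$ and equal to $\bb{0}$ with probability $1 - \|\bb{x}\|_1$. Then $\xi_i - m x_i = \sum_{r=1}^m (Y_{r,i} - x_i)$ is a sum of centered i.i.d.\ terms. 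Since at most one coordinate of $\bb{Y}_r$ equals $1$, one has $Y_{r,i}Y_{r,j} = Y_{r,i}\ind_{\{i=j\}}$, so the single-trial moments are simply $\EE[Y_{r,i}] = x_i$, $\EE[Y_{r,i}Y_{r,j}] = x_i\ind_{\{i=j\}}$, and $\EE[Y_{r,i}Y_{r,j}Y_{r,\ell}] = x_i\ind_{\{i=j=\ell\}}$.

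For \eqref{eq:thm:central.moments.eq.2} and \eqref{eq:thm:central.moments.eq.3}, I would expand the product of the centered sums over the trial indices and discard every term that contains a trial index appearing exactly once (such a term factors off an independent mean-zero piece). Only the fully diagonal term survives, yielding $m$ times the corresponding single-trial centered moment. A short expansion of $\EE[(Y_{r,i}-x_i)(Y_{r,j}-x_j)]$ and of $\EE[(Y_{r,i}-x_i)(Y_{r,j}-x_j)(Y_{r,\ell}-x_\ell)]$ using the moments above reproduces the stated right-hand sides, after using $x_i\ind_{\{i=\ell\}} = x_\ell\ind_{\{i=\ell\}}$ (and its analogues) to put the answer in the symmetric form written in the lemma. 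For \eqref{eq:thm:central.moments.eq.4}, the same ``no isolated trial index'' principle shows that the only set partitions of the four factors that contribute are the single block of size $4$, contributing $m\cdot\OO(1)$, and the three pairings into two blocks of size $2$, each contributing $m(m-1)\cdot\OO(1)$; since $\bb{Y}_r$ and $\bb{x}$ are bounded, all single-trial centered moments are $\OO(1)$, so the total is $\OO(m^2)$.

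I expect no genuine obstacle: the only point requiring a little care is the combinatorial bookkeeping behind \eqref{eq:thm:central.moments.eq.4}, namely confirming that partitions with a singleton block vanish by independence and that the $(2,2)$-contribution is exactly of order $m^2$ rather than larger. Alternatively, the whole statement can be obtained by quoting the explicit formulas for the multinomial joint central moments derived in \citet{Ouimet_Stats_2021}, specializing to orders two, three and four.
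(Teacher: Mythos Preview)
Your proposal is correct. The paper itself does not give a proof of this lemma at all: it simply states the result and attributes the formulas to \citet{Ouimet_Stats_2021}, which is precisely the alternative you mention at the end. Your primary argument --- writing $\bb{\xi}$ as a sum of $m$ i.i.d.\ single-trial categorical vectors, exploiting $Y_{r,i}Y_{r,j}=Y_{r,i}\ind_{\{i=j\}}$, and discarding set-partitions with singleton blocks --- is the standard elementary derivation and is more self-contained than the paper's bare citation. The symmetry remark $x_i\ind_{\{i=\ell\}}=x_\ell\ind_{\{i=\ell\}}$ is exactly what is needed to match the form of \eqref{eq:thm:central.moments.eq.3}, and your partition count for \eqref{eq:thm:central.moments.eq.4} (one block of size~$4$ and three $(2,2)$ pairings) gives the $\OO(m^2)$ bound immediately.
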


\section*{Conflict of the interest}

The author declares no conflict of interest.

\section*{Funding information}

The author acknowledges (past) support of a postdoctoral fellowship from the NSERC (PDF) and a postdoctoral fellowship supplement from the FRQNT (B3X).
The author is currently supported by a CRM-Simons postdoctoral fellowship from the Centre de recherches math\'ematiques and the Simons Foundation.

\section*{Acknowledgments}

We thank the anonymous referees for their comments.

%
%

\bibliographystyle{apalike}
\bibliography{Ouimet_2022_Bernstein_boundary_bib}

\end{document}